\crefname{hypothesis}{Hypothesis}{Hypotheses}
\crefname{fact}{Fact}{Facts}
\title{Parareal Algorithm for Coupled Elliptic-Parabolic Problems\thanks{Submitted to the editors DATE.}}
\author{Iñigo Jimenez-Ciga\thanks{Department of Statistics, Computer Science and Mathematics, Public University of Navarre, Spain 
  (\email{inigo.jimenez@unavarra.es}.}
\and Francisco Gaspar\thanks{Department of Applied Mathematics, University of Zaragoza, Spain
  (\email{fjgaspar@unizar.es}.}
\and Kundan Kumar\thanks{Department of Mathematics, University of Bergen, Norway
  (\email{kundan.kumar@uib.no, florin.radu@uib.no}.} \and  Florin A. Radu\footnotemark[4]}
\Crefname{ALC@unique}{Line}{Lines}
\tikzstyle{startstop} = [rectangle, rounded corners, minimum width=1cm, minimum height=1cm,text centered, draw=black, fill=red!30]
\tikzstyle{io} = [trapezium, trapezium left angle=70, trapezium right angle=110, minimum width=3cm, minimum height=1cm, text centered, draw=black, fill=blue!30]
\tikzstyle{process} = [rectangle, minimum width=1cm, minimum height=1cm, text centered, draw=black, fill=orange!30]
\tikzstyle{decision} = [diamond, minimum width=3cm, minimum height=1cm, text centered, draw=black, fill=green!30]
\tikzstyle{arrow} = [thick,->,>=stealth]
\begin{document}

\maketitle

\begin{abstract}
We present a convergence analysis of the parallel-in-time integration method known as the Parareal algorithm for degenerate differential-algebraic systems arising from quasi-static Biot models, which govern coupled flow and deformation in porous media. The underlying system exhibits a saddle-point structure and degeneracy due to the quasi-static assumption. We extend the Parareal algorithm to this setting and propose three coarse propagators: monolithic, fixed-stress, and multirate fixed-stress schemes. For each, we derive sufficient conditions for convergence and establish explicit time step restrictions that guarantee contractivity of the iteration matrix.   Numerical experiments show computational savings accrued by using a parareal solver in multiphysics simulations involving poroelasticity and other coupled systems.
\end{abstract}

\begin{keywords}
Poroelasticity, Parareal, Multirate schemes, Multiphysics simulations
\end{keywords}

\begin{MSCcodes}
65M12, 65M55, 65Y05, 76S05
\end{MSCcodes}

\section{Introduction}
In the context of increasingly complex computer architectures, parallelization has become a fruitful strategy in order to reduce runtimes of simulations. Among parallel-in-time integrators, the Parareal algorithm is well-known for its simplicity and efficiency. This algorithm was first introduced by Lions, Maday and Turinici in \cite{foundational2001lions}, and subsequent equivalent formulations were proposed by \cite{foundational2002baffico} and \cite{BalMaday2002}. It consists of a parallel-in-time integration method that combines two numerical propagators with different computational characteristics. A coarse propagator provides inexpensive but less accurate approximations, while a fine propagator delivers accurate solutions albeit at greater computational cost. This decomposition enables parallel computation across temporal subintervals. A coarser time mesh is usually considered for the coarse propagator, whereas the fine propagator is implemented on a refined grid. The Parareal algorithm can be interpreted as a predictor-corrector method, as well as a multiple shooting method or a multigrid method (see \cite{scalaranalysis2007gander}).

Rigorous convergence analysis of the Parareal algorithm has been studied for the initial value problem with a symmetric positive definite matrix $A \in \mathbb{R}^{n \times n}$ given by
\begin{equation}
\frac{du}{dt} + Au = f(t), \quad u(0) = u_0, \quad t \in [0,T],
\label{eq:ivp}
\end{equation}
where $u(t) \in \mathbb{R}^n$ represents the solution vector and $f(t) \in \mathbb{R}^n$ is a known source term \cite{scalaranalysis2007gander}. The symmetric positive definiteness of $A$ (i.e., $A = A^\top$ and $\langle x, Ax \rangle > 0$ for all nonzero $x$) ensures the convergence of the Parareal algorithm.

In this work, we present a novel theoretical framework to study the convergence of the Parareal method for  degenerate differential-algebraic equations of the form
\begin{equation}
C \frac{d}{dt}
\begin{pmatrix}
u \\ p
\end{pmatrix}
+ A
\begin{pmatrix}
u \\ p
\end{pmatrix}
= 
\begin{pmatrix}
f \\ g
\end{pmatrix},
\label{eq:biot_system}
\end{equation}
where $C$ has a degenerate structure.  This type of differential-algebraic equations typically arises from the spatial discretization of coupled elliptic–parabolic problems. In \eqref{eq:biot_system}, $u$ denotes the unknown of the elliptic problem, while $p$ denotes the unknown of the parabolic equation. Representative examples of such coupled problems include poroelasticity (see, e.g., \cite{cheng2016poroelasticity} for geosciences applications, \cite{cowin1999bone} for biological applications), thermoelasticity \cite{nowacki2013thermoelasticity} (e.g., chapter 1) and electrochemical systems such as fuel cells and lithium-ion batteries \cite{saralee2016thesis}. Typically,  the capacity matrix $C$ contains zeros in its first block row and is given by
\begin{equation*}
C = \begin{pmatrix}
0 & 0 \\
C_{pu} & C_{pp} 
\end{pmatrix}.
\end{equation*}

The properties of the matrix $A$, written in block form by
\begin{equation*}
A = \begin{pmatrix}
A_{uu} & A_{up} \\
A_{pu} & A_{pp} 
\end{pmatrix},
\end{equation*}
depend on the specific application under consideration. The degeneracy of $C$ alters the system's dynamics compared to standard parabolic problems. The first block row enforces an instantaneous elliptic constraint on the unknown $u$, while the second row governs the evolution of $p$ through a parabolic equation modified by coupling terms. The coupled nature of the model problem and the degenerate structure of $C$  lead to an interplay between iterative schemes for the problem, time stepping schemes, and different choices for the propagators within the Parareal algorithm. The convergence of the resulting numerical schemes is by no means guaranteed. The theory developed in this work is not limited to this class of problems and applies to \cref{eq:biot_system} with arbitrary matrix  $C$.

Regarding the sequential solution of \cref{eq:biot_system}, three different approaches are commonly used to solve elliptic–parabolic problems. The first class of methods includes the so-called fully implicit, or monolithic, methods, in which all unknowns are solved simultaneously. The main drawback of these schemes is their high computational cost. The second class comprises iterative coupling schemes, in which either the parabolic or the elliptic problem is solved first, followed by the solution of the other subproblem. This procedure is repeated until a converged solution is obtained within a prescribed tolerance. These methods are attractive in practice, since they involve the solution of smaller linear systems for which well-known and efficient linear solvers can be employed.  Finally, the third class includes explicit, or non-iterative sequential, schemes, in which the system is decoupled and no iterations between the subproblems are required. The main advantages of these methods are their low computational cost and ease of implementation compared with the previous approaches, possibly at the expense of reduced numerical accuracy.  Abundant literature exists on these three types of methods for the poroelasticity problem (see \cite{settari1998coupled}). Examples of well-known iterative coupling methods include the undrained  and fixed-stress splitting methods \cite{reveron2021iterative, CastellettoWhiteTchelepi2015, KimTchelepiJuanes2011, mikelic2013}.

Usually, the elliptic equation exhibits a slower time scale than the parabolic equation. For example, in the context of poroelasticity, the mechanical component evolves more slowly than the flow component, whereas in electrochemical processes at the cathode, the electric potential equation is associated with a  faster time scale than the ionic diffusion equation. This separation of time scales can be exploited by multirate time-integration schemes, in which multiple fine time steps are taken for the parabolic equation within a single coarse time step for the elliptic equation. The multirate approach is motivated from the field of ordinary differential equations \cite{savcenco2007multirate}. In other multiphysics applications, for example, involving Stokes-Darcy type or other hyperbolic models, multirate approaches have been used in \cite{delpopolo2019conservative, rybak2014multiple, rybak2015multirate, ShanZhengLayton2013}. The splitting schemes applied to \eqref{eq:biot_system} provide a natural framework for multirate methods, as the decoupling of the two equations enables the use of different time steps for each equation. Similarly, independent spatial discretizations can be employed for the elliptic and parabolic equations, leading to multiscale approaches. In the context of the poroelasticity problem, multirate extensions of the undrained split and the fixed stress split schemes were proposed in the work of \cite{almani2016convergence, kumar2016multirate}, and nonlinear and multiscale extensions of the fixed stress split scheme were proposed in the work of \cite{borregales2018robust, kraus2024fixed} and \cite{dana2018convergence}. Moreover, the convergence of the undrained split iterative scheme in heterogenous poroelastic media was established in the work of \cite{almani2020convergence} for both the single rate and multirate schemes, and the convergence of the single rate fixed stress split scheme in heterogeneous poroelastic media was addressed in the work of \cite{almani2023convergence}.   In this work, we follow this approach for the more general degenerate elliptic-parabolic problem \cref{eq:biot_system}, that is, employing an extended formulation of both well-known fixed-stress and multirate fixed-stress iterative schemes. We then apply the theory developed here to study the convergence of the Parareal method combined with the aforementioned integration schemes, both analytically and numerically.

The remainder of the paper is structured as follows. \Cref{sec:parareal} contains a brief description of the parallel-in-time Parareal algorithm, and new convergence conditions are stated in a more general setting. \Cref{sec:biot} is devoted to the formulation of the quasi-static Biot's consolidation model, which motivates the extension of the well-known fixed-stress scheme and the multirate fixed-stress iterative method for the coupled elliptic-parabolic problem \cref{eq:biot_system}. The section also provides a thorough description of both methods. \Cref{sec:contraction} includes the main results of the present work, providing theoretical constrains for the time step size in order to ensure convergence of the Parareal algorithm for the monolithic approach, the fixed-stress and multirate fixed-stress methods. Finally, \Cref{sec:experiments} illustrates the convergence behaviuour of the proposed solvers in a more realistic framework, and, in \Cref{sec:conclusions}, some conclusions are drawn and further extensions of this work are sketched.

\section{The Parareal algorithm} \label{sec:parareal}

This section is devoted to the formulation of the Parareal algorithm and the general conditions that the propagators are required to satisfy in order to guarantee the convergence of the method. First introduced in \cite{foundational2001lions}, the parallel-in-time integrator stands out for being extremely efficient dealing with parabolic problems. 

\subsection{Formulation}

In this work, we consider the formulation given by \cite{foundational2002baffico}, which is equivalent to the one proposed in the foundational paper \cite{foundational2001lions}. Let us consider two different partitions of the time interval $[0,T]$. First, let us define a uniform division into $N_c$ subintervals, denoted by $[T_n,T_{n+1}]$, for $n\in\{0,1,\dots,N_c-1\}$, and coarse time step $\Delta T = T/N_c$. Each coarse subinterval is subsequently divided uniformly into $N_f$ smaller intervals, denoted by $[t_{nN_f + j}, t_{nN_f+j+1}]$, for $j\in\{0,1,\dots,N_f-1\}$. Observe that $T_n = t_{nN_f}$ for all $n\in\{0,1,\dots,N_c-1\}$. The fine time step is then denoted by $\delta t = \Delta T / N_f = T/(N_cN_f)$. Figure \ref{fig:time_mesh} illustrates the two time meshes under consideration for the formulation of the Parareal algorithm. We remark that time meshes do not necessarily have to be uniform, in fact, there already exists an adaptive version of the Parareal algorithm (cf. \cite{adaptive2020maday}). However, for the sake of simplicity, this work will stick to this simplified version of the time discretization.

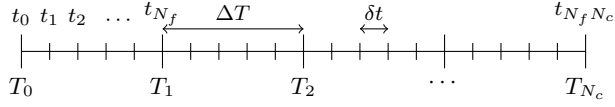
\begin{figure}[t]
\centering

        \begin{tikzpicture}[scale=0.75]
        \draw (0,0) -- (10,0);
        \draw (0,-0.3) -- (0,0.3);
        \draw (2.5,-0.3) -- (2.5,0.3);
        \draw (5,-0.3) -- (5,0.3);
        \draw (7.5,-0.3) -- (7.5,0.3);
        \draw (10,-0.3) -- (10,0.3);
        \draw (0.5,-0.15) -- (0.5,0.15);
        \draw (1,-0.15) -- (1,0.15);
        \draw (1.5,-0.15) -- (1.5,0.15);
        \draw (2,-0.15) -- (2,0.15);
        \draw (3,-0.15) -- (3,0.15);
        \draw (3.5,-0.15) -- (3.5,0.15);
        \draw (4,-0.15) -- (4,0.15);
        \draw (4.5,-0.15) -- (4.5,0.15);
        \draw (5.5,-0.15) -- (5.5,0.15);
        \draw (6,-0.15) -- (6,0.15);
        \draw (6.5,-0.15) -- (6.5,0.15);
        \draw (7,-0.15) -- (7,0.15);
        \draw (8,-0.15) -- (8,0.15);
        \draw (8.5,-0.15) -- (8.5,0.15);
        \draw (9,-0.15) -- (9,0.15);
        \draw (9.5,-0.15) -- (9.5,0.15);

        \draw [<->] (2.5,0.4) -- (5,0.4);
        \draw [<->] (6,0.4) -- (6.5,0.4);

        \path [anchor=south] (0,0.3) node {\footnotesize $t_0$};
        \path [anchor=south] (0.5,0.3) node {\footnotesize $t_1$};
        \path [anchor=south] (1,0.3) node {\footnotesize $t_2$};
        \path [anchor=south] (2.5,0.3) node {\footnotesize $t_{N_f}$};
        \path [anchor=south] (10,0.3) node {\footnotesize $t_{N_fN_c}$};
        \path [anchor=south] (3.75,0.4) node {\footnotesize $\Delta T$};
        \path [anchor=south] (6.25,0.4) node {\footnotesize $\delta t$};
        \path [anchor=south] (1.75,0.3) node {\footnotesize $\cdots$};
        \path [anchor=north] (0,-0.3) node {\small $T_0$};
        \path [anchor=north] (2.5,-0.3) node {\small $T_1$};
        \path [anchor=north] (5,-0.3) node {\small $T_2$};
        \path [anchor=north] (7.5,-0.3) node {\small $\cdots$};
        \path [anchor=north] (10,-0.3) node {\small $T_{N_c}$};
	\end{tikzpicture}
	\caption{Coarse and fine discretizations of the time interval $[0,T]$.}
	\label{fig:time_mesh}
\end{figure}

It is well known that the Parareal algorithm is based on two time integrators, the so-called coarse and fine propagators. The former propagator is cheap and inaccurate, and it runs sequentially over the coarse time mesh, whereas the latter performs in the fine mesh and is expensive but accurate. The algorithm permits the parallelization of the latter for every coarse time interval $[T_n,T_{n+1}]$. Note that both propagators are not inherently different. In fact, more often than not, a single time integrator is used together with different time meshes which provide more or less accuracy to the approximations.

Since we are considering the numerical integration of problem \cref{eq:biot_system}, the formulation of the Parareal algorithm is only described for systems of linear differential equations. For the sake of clarity, let us denote the vector containing both unknowns from the elliptic and parabolic equations by $\boldsymbol{v} = \begin{pmatrix}
    \boldsymbol{u}^\top & p^\top
\end{pmatrix}^\top$.

Then, the Parareal algorithm attempts to obtain a sufficiently accurate approximation of the solution of the system
\begin{equation} \label{eq:fine_problem}
    M_F \boldsymbol{V} = \boldsymbol{b}_{F},
\end{equation}
where
\begin{equation*}
    M_F = \begin{pmatrix}
        I & & & \\ -\Phi_F & I & & \\ & \ddots & \ddots & \\ & & -\Phi_F & I
    \end{pmatrix}, \quad \boldsymbol{V} = \begin{pmatrix}
        \boldsymbol{v}_0 \\ \boldsymbol{v}_1 \\ \vdots \\ \boldsymbol{v}_{N_c}
    \end{pmatrix}, \quad \text{and} \quad \boldsymbol{b}_F = \begin{pmatrix}
    \boldsymbol{v}_0 \\ \boldsymbol{b}_{F,1} \\ \vdots \\ \boldsymbol{b}_{F,N_c}
    \end{pmatrix},
\end{equation*}
taking advantage of the available parallel capabilities. Observe that the fully-discrete problem \cref{eq:fine_problem} denotes in fact the compact formulation of the fine propagator for all the interval $[0,T]$, such that the approximation $\boldsymbol{v}_{n+1}$ at time $t = T_{n+1}$ given by the fine propagator is computed as follows
\begin{equation} \label{eq:fine_step}
    \boldsymbol{v}_{n+1} = \Phi_F\boldsymbol{v}_n + \boldsymbol{b}_{F,n+1}.
\end{equation}
In the following, the fine approximation (also known as target solution), that is, the solution of \cref{eq:fine_problem}, is denoted by $\boldsymbol{V}^F$.

Analogously to \cref{eq:fine_step}, let us denote by $\Phi_G$ the matrix of the system resulting from the discretization in time by the coarse propagator for the integration over one coarse time interval. Then, the resulting system can be written as
\begin{equation*}
    \boldsymbol{v}_{n+1} = \Phi_G\boldsymbol{v}_n + \boldsymbol{b}_{G,n+1},
\end{equation*}
where $\boldsymbol{b}_{G,n+1}$ and $\boldsymbol{b}_{F,n+1}$ are possibly different. In addition, the whole system of equations for the integration by means of the coarse propagator can be written as $M_G\boldsymbol{V} = \boldsymbol{b}_{G}$, where
\begin{equation} \label{eq:notation_matrix2}
    M_G = \begin{pmatrix}
        I & & & \\ -\Phi_G & I & & \\ & \ddots & \ddots & \\ & & -\Phi_G & I
    \end{pmatrix} \quad \text{and} \quad \boldsymbol{b}_G = \begin{pmatrix}
    \boldsymbol{v}_0 \\ \boldsymbol{b}_{G,1} \\ \vdots \\ \boldsymbol{b}_{G,N_c}
    \end{pmatrix}.
\end{equation}

After introducing the notation employed in this section for the coarse and fine propagators, let us describe the formulation of the Parareal algorithm. Such an integrator is an iterative method that requires an initial guess. Among the possible options, we consider a coarse approximation computed sequentially over the whole time interval, that is, $\boldsymbol{v}_0^0 = \boldsymbol{v}_0$ and
\begin{equation*}
    \boldsymbol{v}_{n+1}^0 = \Phi_G\boldsymbol{v}_n^0 + \boldsymbol{b}_{G,n+1},
\end{equation*}
for all $n \in \{0,1,\dots,N_c-1\}$. The rest of the iterations $k\in\{0,1,\dots\}$ are given by the following expression: $\boldsymbol{v}_0^{k+1} = \boldsymbol{v}_0$ and
\begin{equation*}
    \boldsymbol{v}_{n+1}^{k+1} = \Phi_G \boldsymbol{v}_n^{k+1} + (\Phi_F - \Phi_G)\boldsymbol{v}_n^k + \boldsymbol{b}_{F,n+1},
\end{equation*}
for $n\in\{0,1,\dots,N_c-1\}$. Observe that the term $(\Phi_F - \Phi_G)\boldsymbol{v}_n^k$ can be computed independently for all $n\in\{0,1,\dots,N_c-1\}$, since the vector $\boldsymbol{v}_n^k$ is known from the previous iteration. This enables the parallelization of the fine propagator, since only the coarse terms $\Phi_G\boldsymbol{v}_n^{k+1}$ have to be computed sequentially at each iteration.

The algorithm is proven to converge in a finite number of iterations to the fine approximation (see \cite{finiteconv2018gander} for further details). However, note that it is worthless to run the Parareal algorithm until the fine approximation is obtained, since the sequential counterpart is notoriously cheaper than the required number of iterations of the algorithm. Instead, it is suggested to run the method until a similar order of magnitude as for the error given by the target solution is obtained (cf. \cite{convcriteria2015arteaga}).

As the Parareal algorithm is essentially an iterative method, it is straightforward to compute its iteration matrix. A single iteration of the algorithm for all time steps can be rewritten in matrix form as
\begin{equation*}
    M_G \boldsymbol{V}^{k+1} = (M_G - M_F)\boldsymbol{V}^k + \boldsymbol{b}_F.
\end{equation*}
If we further define the error between the approximation of the Parareal algorithm at the $k$-th iteration and the fine approximation as $\boldsymbol{e}^{k} = \boldsymbol{V}^k - \boldsymbol{V}^F$, following \cite{itermatrix2021buvoli} and \cite{itermatrix2018ruprecht}, it can be shown that the recursive relation $\boldsymbol{e}^{k+1} = (I - M_G^{-1}M_F)\boldsymbol{e}^k$ is satisfied. In this framework, the iteration matrix $E = I - M_G^{-1}M_F$ is usually considered in order to analyze the convergence properties of the Parareal algorithm (see \cite{itermatrix2021buvoli} and \cite{itermatrix2018ruprecht} for further details).

\subsection{Convergence conditions}

As mentioned in the introduction, the convergence of the Parareal algorithm is by no means straightforward to analyze. Gander and Vandewalle (cf. \cite{scalaranalysis2007gander}) first attempted to prove convergence for the scalar Dahlquist test problem, being able to show significant improvement of the method if it was implemented for parabolic problems. As explained above, further extensions of such an analysis have included the convergence analysis of the system \cref{eq:ivp}, under demanding conditions for the matrix $A$. In \cite{convergence2024shulin}, this approach is considered in order to reduce the matrix problem into several scalar problems, taking advantage of a Schur decomposition. To the best of our knowledge, very few papers discuss the convergence properties of the Parareal algorithm for more general cases, such as problem \cref{eq:biot_system} with $A$ not necessarily symmetric, and even fewer succeed to address theoretical convergence conditions in this setting. The aim of this work is to further extend these conditions for a more general framework, such that they are suitable for problem \cref{eq:biot_system}.

As mentioned previously, the starting point of the analysis is the expression $\boldsymbol{e}^{k+1} = (I - M_G^{-1}M_F)\boldsymbol{e}^k$, or equivalently, $M_G\boldsymbol{e}^{k+1} = (M_G - M_F)\boldsymbol{e}^k$. Most approaches attempt to address the convergence with respect to the iteration matrix $E$, trying to show that its norm can be bounded by one. In this proposal, we consider the approach given by \cite{matrixapproach2024nuca}, assuming coercivity of $M_G$ and continuity of $M_G-M_F$.

\begin{proposition} \label{Prop_MG}
    Let us assume that operator $M_G$ given by \cref{eq:notation_matrix2} is coercive, with coercivity constant $\alpha_{M_G}$, and that the inequality $\|M_G-M_F\| < \alpha_{M_G}$ holds. Then, the Parareal algorithm is convergent.
\end{proposition}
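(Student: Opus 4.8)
The plan is to argue directly from the error recursion $M_G\boldsymbol{e}^{k+1} = (M_G - M_F)\boldsymbol{e}^k$ derived above, rather than through the iteration matrix $E = I - M_G^{-1}M_F$; this sidesteps having to control the (generally non-normal) product $M_G^{-1}M_F$ and instead reduces everything to a contraction estimate in the chosen norm on $\mathbb{R}^{(N_c+1)n}$.

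First I would extract from coercivity the bound $\|M_G^{-1}\| \le \alpha_{M_G}^{-1}$. Indeed, coercivity means $\langle M_G\boldsymbol{x},\boldsymbol{x}\rangle \ge \alpha_{M_G}\|\boldsymbol{x}\|^2$ for all $\boldsymbol{x}$; combining this with Cauchy--Schwarz, $\alpha_{M_G}\|\boldsymbol{x}\|^2 \le \langle M_G\boldsymbol{x},\boldsymbol{x}\rangle \le \|M_G\boldsymbol{x}\|\,\|\boldsymbol{x}\|$, so $\|M_G\boldsymbol{x}\| \ge \alpha_{M_G}\|\boldsymbol{x}\|$ for every $\boldsymbol{x}$. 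Hence $M_G$ is injective, therefore invertible since the space is finite-dimensional, and substituting $\boldsymbol{x} = M_G^{-1}\boldsymbol{y}$ yields $\|M_G^{-1}\boldsymbol{y}\| \le \alpha_{M_G}^{-1}\|\boldsymbol{y}\|$.

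Next I would apply $M_G^{-1}$ to the recursion and take norms: $\|\boldsymbol{e}^{k+1}\| = \|M_G^{-1}(M_G - M_F)\boldsymbol{e}^k\| \le \|M_G^{-1}\|\,\|M_G - M_F\|\,\|\boldsymbol{e}^k\| \le \rho\,\|\boldsymbol{e}^k\|$, where $\rho := \|M_G - M_F\|/\alpha_{M_G}$. The hypothesis $\|M_G - M_F\| < \alpha_{M_G}$ gives $\rho < 1$, so by induction $\|\boldsymbol{e}^k\| \le \rho^k\|\boldsymbol{e}^0\| \to 0$ as $k \to \infty$; that is, the Parareal iterates $\boldsymbol{V}^k$ converge to the target solution $\boldsymbol{V}^F$, with linear rate $\rho$.

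The one point that needs care — and which I expect to be the only real subtlety — is pinning down the meaning of the hypotheses, since $M_G$ is not symmetric. ``Coercive with constant $\alpha_{M_G}$'' should be read as: the symmetric part $\tfrac12(M_G + M_G^{\top})$ is positive definite with smallest eigenvalue at least $\alpha_{M_G}$, because the bilinear form $\langle M_G\cdot,\cdot\rangle$ only detects this symmetric part. One must also fix the inner product and induced norm — the block Euclidean structure on $\mathbb{R}^{(N_c+1)n}$ — uniformly in both hypotheses, since both $\alpha_{M_G}$ and $\|M_G - M_F\|$ are norm-dependent. Once these conventions are fixed, the argument above is just the standard Banach-type contraction estimate and involves no further difficulty; the genuine work of the paper lies instead in verifying, for each concrete coarse propagator, that $\|M_G - M_F\| < \alpha_{M_G}$ holds under suitable time-step restrictions.
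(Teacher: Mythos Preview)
Your proof is correct and follows essentially the same route as the paper: both combine coercivity with Cauchy--Schwarz on the error recursion $M_G\boldsymbol{e}^{k+1}=(M_G-M_F)\boldsymbol{e}^k$ to obtain the contraction $\|\boldsymbol{e}^{k+1}\|\le(\|M_G-M_F\|/\alpha_{M_G})\|\boldsymbol{e}^k\|$. The only cosmetic difference is that you first isolate the bound $\|M_G^{-1}\|\le\alpha_{M_G}^{-1}$ as an intermediate step, whereas the paper tests the recursion directly against $\boldsymbol{e}^{k+1}$; the underlying inequalities are identical.
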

\begin{proof}
    The expression $M_G\boldsymbol{e}^{k+1} = (M_G - M_F)\boldsymbol{e}^k$ is multiplied by the vector $\boldsymbol{e}^{k+1}$, obtaining
    \begin{displaymath}
        \langle M_G \boldsymbol{e}^{k+1}, \boldsymbol{e}^{k+1} \rangle = \langle (M_G - M_F)\boldsymbol{e}^k, \boldsymbol{e}^{k+1}\rangle.
    \end{displaymath}
    By definition of coercivity, it follows $\langle M_G\boldsymbol{e}^{k+1},\boldsymbol{e}^{k+1}\rangle \geq \alpha_{M_G} \|\boldsymbol{e}^{k+1}\|^2$. On the other hand, by applying Cauchy-Schwarz inequality, we obtain
    \begin{align*}
        \alpha_{M_G}\|\boldsymbol{e}^{k+1}\|^2 & \leq \langle (M_G - M_F)\boldsymbol{e}^k, \boldsymbol{e}^{k+1} \rangle \\ & \leq \|M_G - M_F\| \|\boldsymbol{e}^k\|\|\boldsymbol{e}^{k+1}\|,
    \end{align*}
    Thus, it follows the inequality
    \begin{equation*}
        \|\boldsymbol{e}^{k+1}\| \leq \frac{\|M_G-M_F\|}{\alpha_{M_G}}\|\boldsymbol{e}^k\|,
    \end{equation*} 
    and the Parareal algorithm guarantees convergence for every iteration.
\end{proof}

Therefore, it is sufficient to show that the operator $M_G$ is coercive and that $M_G$ and $M_F$ are sufficiently close to each other so as to prove convergence of the Parareal algorithm. However, we would sooner work directly on the operator $\Phi_G$, rather than proving coercivity of $M_G$. For that purpose, we state a sufficient condition on $\Phi_G$ such that the property is satisfied for $M_G$.

\begin{proposition} \label{prop:contractionv2}
    Let $\|\cdot\|$ be any vector norm induced by an inner product. Then, if $\|\Phi_G\| < 1$ is satisfied for its induced matrix norm, the operator $M_G$ defined in \cref{eq:notation_matrix2} is coercive.
\end{proposition}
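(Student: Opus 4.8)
The plan is to compute $\langle M_G\boldsymbol{e},\boldsymbol{e}\rangle$ explicitly in block form and extract a lower bound of the shape $(1-\|\Phi_G\|)\|\boldsymbol{e}\|^2$, which is exactly coercivity with constant $\alpha_{M_G}=1-\|\Phi_G\|>0$. Throughout, the inner product on the product space on which $M_G$ acts is taken to be the one for which $\|\boldsymbol{e}\|^2=\sum_{n=0}^{N_c}\|e_n\|^2$, where $\boldsymbol{e}=(e_0,\dots,e_{N_c})$ and $\|\cdot\|$ on the blocks is the given inner-product norm.

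First I would write out the action of $M_G$ from \cref{eq:notation_matrix2}: $(M_G\boldsymbol{e})_0=e_0$ and $(M_G\boldsymbol{e})_n=e_n-\Phi_G e_{n-1}$ for $1\le n\le N_c$. Pairing with $\boldsymbol{e}$ and expanding the inner product blockwise gives
\[
\langle M_G\boldsymbol{e},\boldsymbol{e}\rangle=\sum_{n=0}^{N_c}\|e_n\|^2-\sum_{n=1}^{N_c}\langle \Phi_G e_{n-1},e_n\rangle .
\]
Next I would estimate each cross term. Because $\|\cdot\|$ is induced by an inner product, the Cauchy–Schwarz inequality together with the definition of the induced operator norm yields $|\langle \Phi_G e_{n-1},e_n\rangle|\le\|\Phi_G\|\,\|e_{n-1}\|\,\|e_n\|$, and Young's inequality then gives $\|\Phi_G\|\,\|e_{n-1}\|\,\|e_n\|\le\tfrac12\|\Phi_G\|\bigl(\|e_{n-1}\|^2+\|e_n\|^2\bigr)$. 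Summing over $n$ and using $\sum_{n=1}^{N_c}\bigl(\|e_{n-1}\|^2+\|e_n\|^2\bigr)\le 2\sum_{n=0}^{N_c}\|e_n\|^2$, I obtain
\[
\langle M_G\boldsymbol{e},\boldsymbol{e}\rangle\ge\bigl(1-\|\Phi_G\|\bigr)\sum_{n=0}^{N_c}\|e_n\|^2=\bigl(1-\|\Phi_G\|\bigr)\|\boldsymbol{e}\|^2,
\]
which is the asserted coercivity, with $\alpha_{M_G}=1-\|\Phi_G\|$.

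There is no serious obstacle here: the argument is a short telescoping-plus-Young's-inequality estimate. The only point requiring care is the choice of norm on the product space — it must be the $\ell^2$-aggregation of the block norms for the cross-term absorption to close — together with the observation that the hypothesis "norm induced by an inner product" is precisely what makes Cauchy–Schwarz and the compatibility $\|\Phi_G x\|\le\|\Phi_G\|\,\|x\|$ simultaneously available. This is why the inner-product assumption in the statement is genuinely used rather than cosmetic.
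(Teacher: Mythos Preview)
Your proof is correct and, in fact, more streamlined than the paper's. Both arguments begin with the same blockwise expansion
\[
\langle M_G\boldsymbol{V},\boldsymbol{V}\rangle=\sum_{i=0}^{N_c}\|\boldsymbol{v}_i\|^2-\sum_{i=0}^{N_c-1}\langle\Phi_G\boldsymbol{v}_i,\boldsymbol{v}_{i+1}\rangle,
\]
but then diverge. You go straight to Cauchy--Schwarz and Young's inequality on each cross term, absorb the resulting telescoped sum into $2\sum\|\boldsymbol{v}_i\|^2$, and land on the explicit coercivity constant $\alpha_{M_G}=1-\|\Phi_G\|$. The paper instead introduces parameters $\lambda_i$ and the auxiliary quantities $\bigl\|\lambda_i\boldsymbol{v}_{i+1}-\tfrac{1}{2\lambda_i}\Phi_G\boldsymbol{v}_i\bigr\|^2$, sets $\lambda_i=1/\sqrt{2}$, shows only that $\langle M_G\boldsymbol{V},\boldsymbol{V}\rangle>0$ for nonzero $\boldsymbol{V}$, and then invokes compactness of the unit sphere to extract an (unspecified) coercivity constant. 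Your route is shorter, avoids the compactness detour, and delivers a concrete constant that can be fed directly into \Cref{Prop_MG}; the paper's parametrized identity is essentially a disguised form of the same Young's inequality, so nothing is lost by your simplification. One minor quibble: the compatibility $\|\Phi_G x\|\le\|\Phi_G\|\,\|x\|$ holds by definition of the induced operator norm in any normed space, so the inner-product hypothesis is needed only for Cauchy--Schwarz and the $\ell^2$-aggregation on the product space, not for that bound.
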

\begin{proof}
    Let us define the arbitrary vector $\boldsymbol{V} = \begin{pmatrix}
        \boldsymbol{v}_0^\top & \boldsymbol{v}_1^\top & \cdots & \boldsymbol{v}_{N_c}^\top
    \end{pmatrix}^\top$. Then, the operator $M_G$ being coercive is equivalent to satisfying the inequality $\langle M_G \boldsymbol{V},\boldsymbol{V}\rangle \ge \alpha_{M_G}\|\boldsymbol{V}\|^2$, with $\alpha_{M_G} > 0$. The case $\boldsymbol{V} = \boldsymbol{0}$ is straightforward to prove. For $\boldsymbol{V} \neq \boldsymbol{0}$, expanding the previous product, it follows
    \begin{equation} \label{eq:prop2_form2}
        \langle M_G \boldsymbol{V}, \boldsymbol{V}\rangle = \sum_{i=0}^{N_c} \langle \boldsymbol{v}_i, \boldsymbol{v}_i \rangle - \sum_{i=0}^{N_c-1}\boldsymbol \langle \Phi_G \boldsymbol{v}_i,\boldsymbol{v}_{i+1} \rangle = \sum_{i=0}^{N_c} \| \boldsymbol{v}_i\|^2 - \sum_{i=0}^{N_c-1}\boldsymbol \langle \Phi_G \boldsymbol{v}_i,\boldsymbol{v}_{i+1} \rangle.
    \end{equation}

    On the other hand, let us consider the following norm:
    \begin{equation} \label{eq:prop2_form1}
        \left\|\lambda_i \boldsymbol{v}_{i+1} - \frac{1}{2\lambda_i}\Phi_G\boldsymbol{v}_i\right\|^2 = \lambda_i^2 \| \boldsymbol{v}_{i+1} \|^2 + \frac{1}{4\lambda_i^2}\|\Phi_G\boldsymbol{v}_i\|^2 - \langle \Phi_G\boldsymbol{v}_i, \boldsymbol{v}_{i+1}\rangle,
    \end{equation}
    where $\lambda_i > 0$ is a parameter to be determined. Observe that this equality only holds for vector norms induced by an inner product, which yields directly from the parallelogram law. Then, by summation of the terms \cref{eq:prop2_form1} for $i\in\{0,1,\dots, N_c-1\}$, we obtain
    \begin{align*}
        \sum_{i=0}^{N_c-1} \left\| \lambda_i \boldsymbol{v}_{i+1} - \frac{1}{2\lambda_i}\Phi_G\boldsymbol{v}_i \right\|^2 = &~ \frac{1}{4\lambda_0^2}\|\Phi_G\boldsymbol{v}_0\|^2 + \sum_{i=1}^{N_c-1} \lambda_{i-1}^2\|\boldsymbol{v}_i\|^2 + \sum_{i=1}^{N_c-1} \frac{1}{4\lambda_i^2}\|\Phi_G\boldsymbol{v}_i\|^2 \\
        &~ + \lambda_{N_c-1}^2\|\boldsymbol{v}_{N_c}\|^2 - \sum_{i=0}^{N_c-1}\langle\Phi_G\boldsymbol{v}_i, \boldsymbol{v}_{i+1}\rangle.
    \end{align*}

    Next, we consider the subtraction between \cref{eq:prop2_form2} and the previous expression, yielding
    \begin{align*}
        \langle M_G \boldsymbol{V}, \boldsymbol{V}\rangle &~- \sum_{i=0}^{N_c-1} \left\| \lambda_i \boldsymbol{v}_{i+1} - \frac{1}{2\lambda_i}\Phi_G\boldsymbol{v}_i \right\|^2 = 
        \|\boldsymbol{v}_0\|^2 - \frac{1}{4\lambda_0^2} \|\Phi_G\boldsymbol{v}_0\|^2  \\
        &~ + \sum_{i=1}^{N_c-1} (1 - \lambda_{i-1}^2)\|\boldsymbol{v}_i\|^2 - \sum_{i=1}^{N_c-1} \frac{1}{4\lambda_i^2} \|\Phi_G\boldsymbol{v}_i\|^2 + (1-\lambda_{N_c-1}^2)\|\boldsymbol{v}_{N_c}\|^2.
    \end{align*}
    Observe that if the subtraction is positive, $M_G$ is guaranteed to be positive definite. Let us consider $\lambda_i = 1/\sqrt{2}$ for all $i \in \{0,1,\dots,N_c-1\}$. Then, we obtain
    \begin{align*}
        \langle M_G \boldsymbol{V}, \boldsymbol{V}\rangle &~- \sum_{i=0}^{N_c-1} \left\| \frac{1}{\sqrt{2}} \boldsymbol{v}_{i+1} - \frac{1}{\sqrt{2}}\Phi_G\boldsymbol{v}_i \right\|^2 = 
        \|\boldsymbol{v}_0\|^2 - \frac{1}{2} \|\Phi_G\boldsymbol{v}_0\|^2  \\
        &~ + \frac{1}{2}\sum_{i=1}^{N_c-1} \left(\|\boldsymbol{v}_i\|^2 -  \|\Phi_G\boldsymbol{v}_i\|^2\right) + \frac{1}{2}\|\boldsymbol{v}_{N_c}\|^2.
    \end{align*}
    Then, since $\|\Phi_G\| < 1$ is fulfilled, it follows
    \begin{equation*}
        \|\boldsymbol{v}_i\| > \|\Phi_G\|\|\boldsymbol{v}_i\| \geq \|\Phi_G\boldsymbol{v}_i\|,
    \end{equation*}
    or equivalently, $\|\boldsymbol{v}_i\|-\|\Phi_G\boldsymbol{v}_i\| > 0$. Since we have assumed that $\boldsymbol{V} \neq \boldsymbol{0}$, at least one term $\boldsymbol{v}_i$ must contain a nonzero entry. If that entry is located in the vector $\boldsymbol{v}_0$, $\|\boldsymbol{v}_0\|^2 - \frac{1}{2}\|\Phi_G\boldsymbol{v}_0\|^2 > 0$ holds. Similarly, if the nonzero entry is located in $\boldsymbol{v}_i$, for any $i \in \{1,2,\dots,N_c-1\}$, it follows $\|\boldsymbol{v}_i\|^2 - \|\Phi_G\boldsymbol{v}_i\|^2 > 0$. Finally, $\|\boldsymbol{v}_{N_c}\|^2 > 0$ holds if the vector $\boldsymbol{v}_{N_c}$ contains a nonzero entry. In all three cases,
    \begin{equation*}
        \langle M_G \boldsymbol{V}, \boldsymbol{V}\rangle - \sum_{i=0}^{N_c-1} \left\| \frac{1}{\sqrt{2}} \boldsymbol{v}_{i+1} - \frac{1}{\sqrt{2}}\Phi_G\boldsymbol{v}_i \right\|^2 > 0,
    \end{equation*}
    and it can be concluded that $\langle M_G\boldsymbol{V},\boldsymbol{V}\rangle > 0$.
    
    Finally, let us define the compact unit sphere $S$ of same dimension as vector $\boldsymbol{V}$. Then, it is well known that there exists a minimum such that $\alpha_{M_G}:=\min_{\boldsymbol{V}\in \partial S} \langle M_G \boldsymbol{V}, \boldsymbol{V} \rangle$. Therefore, since $\boldsymbol{V} \neq \boldsymbol{0}$, let us define $\boldsymbol{U} = \frac{1}{\|\boldsymbol{V}\|}\boldsymbol{V}$, which satisfies $\boldsymbol{U} \in S$. Then, it follows $\langle M_G \boldsymbol{V}, \boldsymbol{V}\rangle = \langle M_G \|\boldsymbol{V}\|\boldsymbol{U}, \|\boldsymbol{V}\|\boldsymbol{U}\rangle = \|V\|^2 \langle M_G \boldsymbol{U}, \boldsymbol{U}\rangle \geq \alpha_{M_G}\|\boldsymbol{V}\|^2$, which proves the claim.
\end{proof}

Consequently, a new condition for the analysis of the convergence of the Parareal algorithm is obtained, dependent only on $\Phi_G$. Remarkably, this condition is closely related to the outperformance of the Parareal algorithm for parabolic problems with clear dissipative behaviour. In the following, it will thus be sufficient to prove that the coarse numerical approximation preserves the contractive property, together with a fine approximation sufficiently close to the coarse one, in order to guarantee the convergence of the Parareal algorithm.

\section{Extension of the Biot's model and iterative solvers} \label{sec:biot}

This section aims to describe the reference problem that inspired the implementation of the Parareal algorithm for coupled elliptic-parabolic problems, that is, the quasi-static Biot model. Once discretized in space, a more general set of differential equations can be obtained, which is embedded into the family of equations given by \cref{eq:biot_system}. For those equations, we derive an extended formulation of the fixed-stress and multirate fixed-stress method, originally proposed for Biot's model.

\subsection{Quasi-static Biot model}\label{subsec2.1}

We consider here the quasi-static Biot model \cite{Biot1956}, a well-established model for flow and deformation in porous media.  Let $\Omega$ be a bounded Lipschitz domain
in $\mathbb{R}^{d}$, $d=2,3$. We are looking to solve the following two equations for the unknowns $u$, which stays for the displacement, and $p$, the fluid pressure:

\begin{subequations}\label{eq:nlBiot}
\begin{align}
-\nabla \cdot \sigma + \alpha \nabla p &= f~~ \text{in}~~ \Omega\times (0,T),\label{nlBiot1}\\
\frac{\partial}{\partial t}\left( \frac{1}{M}p+\alpha \nabla \cdot u\right) - \nabla \cdot (K \nabla p)  &=g\;\;\text{in}~~ \Omega\times (0,T).
\label{nlBiot2}
\end{align}
\end{subequations}
The first equation above is a momentum balance, while the second stays for the mass balance. Observe that the equations are fully coupled. Moreover, $\sigma=2 \mu \varepsilon(u) + \lambda \nabla \cdot u I$ denotes the effective stress, where
$\varepsilon(u) := \frac{1}{2} (\nabla u + (\nabla u)^T)$ is the linearized strain tensor. Furthermore, $\lambda$ and $\mu$ are the Lam\'{e} parameters, $\alpha$ is the Biot-Willis coefficient, $M$ is the Biot modulus, and $K$ is the hydraulic conductivity tensor.

In the following, we assume that the poro-elastic medium is heterogenous, isotropic and saturated, whereas the fluid is slightly compressible. In turn, all the physical parameters are assumed to be positive, although they may vary in space and time. The permeability tensor $K$ is assumed to be symmetric, bounded, uniformely positive definite in space and constant in time.

With proper boundary and initial conditions, the system~\cref{eq:nlBiot} has a unique solution, see, for instance, \cite{Showalter2000,  Zenisek1984}. For the sake of simplicity, we consider the following homogeneous boundary conditions:
\begin{align*}
    p &= 0               && \text{on } \Gamma_{p,D}, &
    (K \nabla p)\cdot n &= 0 && \text{on } \Gamma_{p,N}, \\
    u &= 0               && \text{on } \Gamma_{u,D}, &
    \sigma n            &= 0 && \text{on } \Gamma_{u,N},
\end{align*}
where 
$\Gamma_{p,D} \cap \Gamma_{p,N} = \emptyset$,
$\overline{\Gamma}_{p,D}\cup \overline{\Gamma}_{p,N}=\Gamma=\partial{\Omega}$,
$\Gamma_{u,D} \cap \Gamma_{u,N} = \emptyset$ and
$\overline{\Gamma}_{u,D} \cup \overline{\Gamma}_{u,N}=\Gamma$
are fulfilled. Nevertheless, the presented solvers and their analysis carry over to other scenarios with minor modifications. Finally, we assume $p(x,0) = p_0(x)$ and $u(x,0) = u_0(x)$ initial conditions compatible with the imposed boundary conditions.

Then, the variational form of the equation \cref{eq:nlBiot} reads: find $(u,p) \in \mathcal{C}^1([0,T]; V) \times \mathcal{C}^1([0,T]; P)$ such that
\begin{subequations}
\begin{align*}
2\mu(\varepsilon(u),\varepsilon(v)) + \lambda (\nabla \cdot u, \nabla \cdot v) - \alpha(p, \nabla \cdot v) &= (f,v) , \qquad \forall v \in V,\\
\frac{1}{M}\left(\frac{\partial p}{\partial t},q\right) + \alpha\left(\frac{\partial}{\partial t}\left(\nabla \cdot u\right), q\right) + (K  \nabla p, \nabla q) &= (g, q), \qquad \forall q \in P,
\end{align*}
\end{subequations}
where the spaces are defined as $V = \{v \in (H^1(\Omega))^d~:~ v|_{\Gamma_{u,D}} = 0\}$ and $P = \{q \in H^1(\Omega)~:~ q|_{\Gamma_{p,D}} = 0\}$.

Let us briefly comment on possible discretizations for the spaces $V$ and $P$. A remarkable property that the pair of finite element discrete spaces have to satisfy is the inf-sup condition. A well-known choice is the stabilized P1-P1 scheme, consisting of the space of piecewise linear continuous vector-valued functions and the space of piecewise linear continuous scalar-valued functions for $V$ and $P$, respectively. This approach was first introduced in \cite{Aguilar_et_al_2008} and further analysed in \cite{Rodrigo_et_al_2016}. This last article also proposes using MINI elements for displacement, adding a space of bubble functions to the previous discrete space. Finally, another choice would be P2-P1, which includes a space of piecewise quadratic continiuous vector-valued functions for $V$, as studied in \cite{MuradLoula1992, MuradThomeeLoula1996}.

If the pair of discrete spaces is denoted by $V_h \times P_h$, the discrete variational form of the equation \cref{eq:nlBiot} reads: find $(u_h,p_h) \in \mathcal{C}^1([0,T]; V_h) \times \mathcal{C}^1([0,T]; P_h)$ such that
\begin{subequations}\label{eq:nlBiot_weak}
\begin{align}
2\mu(\varepsilon(u_h),\varepsilon(v_h)) + \lambda (\nabla \cdot u_h, \nabla \cdot v_h) - \alpha(p_h, \nabla \cdot v_h) &= (f,v_h) , \qquad \forall v_h \in V_h, \label{nlBiot_weak1}\\
\frac{1}{M}\left(\frac{\partial p_h}{\partial t},q_h\right) + \alpha\left(\frac{\partial}{\partial t}\left(\nabla \cdot u_h\right), q_h\right) + (K  \nabla p_h, \nabla q_h) &= (g, q_h), \qquad \forall q_h \in P_h.  \label{nlBiot_weak2}
\end{align}
\end{subequations}
Observe that the notation $u_h \equiv u_h(t)$ and $p_h \equiv p_h(t)$ is adopted for the sake of convenience.

Finally, note that this problem can be included in the family of the coupled elliptic-parabolic problems \cref{eq:biot_system} with $A_{uu}$ and $A_{pp}$ symmetric and positive-definitive, $A_{pu} = 0$, $C_{pu} = -A_{up}^\top$ and $C_{pp} = c_fI$, with $c_f \in \mathbb{R}_+$. Therefore, we henceforth study the coupled problem given by
\begin{equation} \label{eq:biot_discr_space}
    \begin{pmatrix}
        0 & 0 \\ A_{up}^\top & c_fI
    \end{pmatrix} \frac{d}{dt}\begin{pmatrix}
        u \\ p
    \end{pmatrix} + \begin{pmatrix}
        A_{uu} & -A_{up} \\ 0 & A_{pp}
    \end{pmatrix} \begin{pmatrix}
        u \\ p
    \end{pmatrix} = \begin{pmatrix}
        f \\ g
    \end{pmatrix}.
\end{equation}
Observe that the notation $u \equiv u_h(t)$ and $p \equiv p_h(t)$ is employed hereafter for simplicity.

\begin{remark}
    Other coupled elliptic-parabolic problems may not be structured as \cref{eq:biot_discr_space}, including, for instance, the electrochemical systems described in \cite{saralee2016thesis}. Such problems are out of the scope of this work, but analogous procedure can lead to results for the convergence of the Parareal algorithm applied to them.
\end{remark}

\subsection{Iterative solvers for the coupled problem} \label{subsec:iterative}

This subsection is devoted to the description of the iterative solvers which stem from the schemes that are typically employed for the numerical integration of the quasi-static Biot model \cref{eq:nlBiot_weak}, namely, the fixed-stress method and the multirate fixed-stress iterative scheme. In particular, these two methods are generalized for their implementation as integrators of problem \cref{eq:biot_discr_space}.

First, on a given equidistant partition $0 = t_0 < t_1 < \cdots < t_{N-1} < t_N = T$ of the the interval $[0,T]$ into subintervals $[t_{n-1},t_n]$, of length $\tau = t_n - t_{n-1}$, $n = 1,2,\dots,n$, we can discretize in time the problem \cref{eq:biot_discr_space} by the backward Euler method. Let us further denote the unknowns at time $t_n$ by $u_n:=u(t_n)$ and $p_n:=p(t_n)$. We use the short notation $f_n = f(t_n)$ and $\tilde{g}_n = \tau g(t_n) + A_{up}^\top u_{n-1} + c_fp_{n-1}$ for the right-hand sides in the backward Euler time-step equations.

Then, the fully-discrete form of the problem \cref{eq:biot_discr_space} reads: given $u_{n-1}$ and $p_{n-1}$, find $u_n$ and $p_n$ such that there holds:
\begin{subequations} \label{eq:fully_discrete_problem}
    \begin{align} 
	A_{uu}u_{n+1} - A_{up}p_{n+1} & = f_{n+1}, \label{eq:elliptic} \\ 
	A_{up}^\top u_{n+1} + (c_fI +A_{pp})p_{n+1} & = \tilde{g}_{n+1}, \label{eq:parabolic}
\end{align}
\end{subequations}
for all $n = 1,2,\dots,N$. If solved directly, we would follow the so-called monolithic approach. However, the direct solver might not be the most efficient strategy on the grounds of the large size of the resulting system of linear equations.

Another alternative lies in the implementation of stabilized iterative splitting methods. In the context of poroelasticity, the most popular iterative methods are the fixed-stress split and the undrained split \cite{kim_undrained2011}. Both methods involve a tuning parameter $L \in \mathbb{R}_+$ connected to the added stabilization term. In this work, we will use the approach provided by the fixed-stress method, although similar schemes can be derived by using the undrained split.

In the following, we will use the index $i$ for the iterations. At each time step $t = t_{n+1}$ and iteration $i \geq 1$, $u_{n+1}^i$ and $p_{n+1}^i$ will be an approximation of the solution $u(t_{n+1})$ and $p(t_{n+1})$ of the system \cref{eq:fully_discrete_problem}, respectively. The iterations are typically starting with the solution at the last time step, that is, $u^0_{n+1} = u_{n}$ and $p_{n+1}^0 = p_{n}$.

The stabilized iterative splitting (hereafter abbreviated as FS) method is described in \Cref{alg1_new}. One starts by solving the equation arising from the parabolic equation for $p_{n+1}^{i}$, using $u^{i-1}_{n+1}$ from the previous iteration. Note that a stabilization term $L(p_{n+1}^i - p_{n+1}^{i-1})$ is added, where $L \in \mathbb{R}_+$ is a free to be chosen tuning parameter, necessary to ensure the convergence of the scheme. After obtaining $p^i_{n+1}$, one solves the equation which follows from the discretization of the elliptic equation, in order to find $u^i_{n+1}$. One iterates until a certain convergence criterion is fulfilled.

\begin{algorithm}
\caption{FS method in fully discrete form}
\label{alg1_new}
\begin{algorithmic}[1]
    \FOR{$n = 0,1,2,\dots,N-1$}
    \STATE{Set $u_{n+1}^0 = u_{n}$ and $p_{n+1}^0 = p_{n}$.}
    \FOR{$i = 1,2,\dots$ until convergence}
    \STATE{Given $u_{n+1}^{i-1}$ and $p_{n+1}^{i-1}$, find $p_{n+1}^i$ such that there holds $$(c_fI + \tau A_{pp})p_{n+1}^i + Lp_{n+1}^i = Lp_{n+1}^{i-1} - A_{up}^\top u_{n+1}^{i-1} + \tilde{g}_{n+1}.$$}
    \STATE{Given $p_{n+1}^i$, find $u_{n+1}^i$ such that there holds $$A_{uu} u_{n+1}^i = A_{up} p_{n+1}^i + f_{n+1}.$$}
    \ENDFOR
    \ENDFOR
\end{algorithmic}
\end{algorithm}

\begin{remark}
    The fixed-stress splitting scheme, which happens to be a particular case of the FS method, has been extensively studied for the quasi-static Biot problem and extensions. We refer to \cite{Storvik2019} for a first convergence analysis, then to \cite{Both2017} for an extension to heterogeneous media. In the latter paper, the convergence was obtained in terms of energy norms.
\end{remark}

\begin{remark}
    The speed of convergence of the FS method depends on the choice of the tuning parameter $L$. We refer to \cite{Rooseetal2003} for a discussion on the choice of $L$. A particular algorithm for choosing an ``optimal'' $L$ is proposed there as well. We mention that an alternative way to enhance the speed of convergence lies in the use of the Anderson acceleration. In \cite{both2019anderson} this was investigated for an extension of the Biot model to variable saturated flow.
\end{remark}

On the other hand, the multirate scheme for the coupled elliptic-parabolic problems introduces a temporal splitting strategy that decouples the elliptic and parabolic subproblems, allowing to take different time steps. This is combined with a regular splitting strategy (such as the FS method), yielding a multirate method that employs larger time steps for the elliptic equation and smaller steps for the parabolic equation.

The physical phenomenon which served as inspiration is the slower mechanical deformation, in contrast to the faster pressure equation arising from the Biot problem. A first attempt to introduce the multirate strategy to porous media and flow problems was made by \cite{girault2016convergence, mikelic2013} for the Biot model and by \cite{ShanZhengLayton2013, xiong2012report} for the evolutive Stokes-Darcy model. However, in this work, we consider the approach given by \cite{almani2016convergence}, while generalizing it for the problem \cref{eq:fully_discrete_problem}.

As already mentioned, the multirate method requires two different time scales for the numerical integration of both equations (the elliptic and the parabolic). In particular, the same time discretization with step size $\tau$ will be employed for the integration of the parabolic equation, whereas a $q$ times coarser time step is considered for the elliptic equation. In the following, let us denote by $n$ the coupling iteration index, by $i$ the time step iteration index of the elliptic equation, and by $j$ the fine time step iteration index of the parabolic equation for each step within the elliptic equation. Then, the multirate scheme based on the FS method is given by \Cref{alg_2_new}.

\begin{algorithm}
    \caption{Multirate Iterative Coupling Algorithm}
    \label{alg_2_new}
    \begin{algorithmic}[1]
    \FOR{$n = 0,q,2q,\dots,N-q$}
    \STATE{Set $u_{n+q}^0 = u_n$ and $p_{n+j}^0 = p_n$ for all $j \in \{1,2,\dots,q\}$.}
    \FOR{$i = 1,2,\dots$ until convergence}
    \STATE{Set $p_n^i = p_n$.}
    \FOR{$j = 1,2,\dots,q$}
    \STATE{Given $u_n$, $u_{n+q}^{i-1}$, $p_{n+j-1}^{i-1}$, $p_{n+j}^{i-1}$, and $p_{n+j-1}^i$, find $p_{n+j}^i$ satisfying \begin{equation*} \begin{split}
        (c_f&+L)\left(\frac{p_{n+j}^i-p_{n+j-1}^i}{\tau}\right) + A_{pp} p_{n+j}^i \\ &= L\left(\frac{p_{n+j}^{i-1}-p_{n+j-1}^{i-1}}{\tau}\right) - A_{up}^\top \left(\frac{u_{n+q}^{i-1}-u_n}{q\tau}\right) + g(t_{n+j}). \end{split}
    \end{equation*}}
    \ENDFOR
    \STATE{Given $p_{n+q}^i$, find $u_{n+q}^i$ satisfying $$A_{uu}u_{n+q}^i = A_{up}p_{n+q}^i + f_{n+q}.$$}
    \ENDFOR
    \ENDFOR
    \end{algorithmic}
\end{algorithm}

The main advantage of the multirate scheme lies in avoiding the computation of the extra time steps for the elliptic problem, which, more often than not, is not strictly required for achieving an accurate approximation.

\begin{remark}
    The speed of convergence is once again dependent on the choice of the parameter $L$. However, convergence is guaranteed for a suitable $L$, as it is shown in \cite{almani2016convergence} for the Biot model. Further extensions of the multirate scheme, as well as their corresponding results, can be found in \cite{Almani2024,almani2023convergence}.
\end{remark}

\begin{remark}
    As shown in \cite{almani2016convergence}, the multirate FS method does not converge to the solution given by \cref{eq:fully_discrete_problem}. Instead, the numerical solution towards which the scheme converges is given by the following set of equalities:
    \begin{subequations} \label{eq:fully_discrete_problem_multirare}
    \begin{align} 
	A_{uu}u_{n+q} - A_{up}p_{n+q} & = f_{n+q}, \label{eq:elliptic_multirate} \\ 
	A_{up}^\top \left(\frac{u_{n+q}-u_n}{q\tau} \right) + c_f\frac{p_{n+j}-p_{n+j-1}}{\tau} + A_{pp}p_{n+j} & = g(t_{n+j}), \quad j \in \{1,\dots,q\}.\label{eq:parabolic_multirate}
\end{align}
\end{subequations}
    This numerical solution is also shown to be close to the solution given by the monolithic implementation (see the numerical experiments provided by \cite{almani2016convergence}).
\end{remark}

All in all, both FS and multirate FS methods are employed as the coupling iterative methods for solving problem \eqref{eq:fully_discrete_problem}. The diagrams in \Cref{fig:FS_flowchart} and \Cref{fig:MFS_flowchart} show a clear comparison between both schemes. In this work, the methods are considered as the coarse and fine propagators of the Parareal algorithm, and using the results obtained in \Cref{sec:parareal}, we will determine conditions over the coarse time step in order to ensure the convergence of the algorithm as a whole.

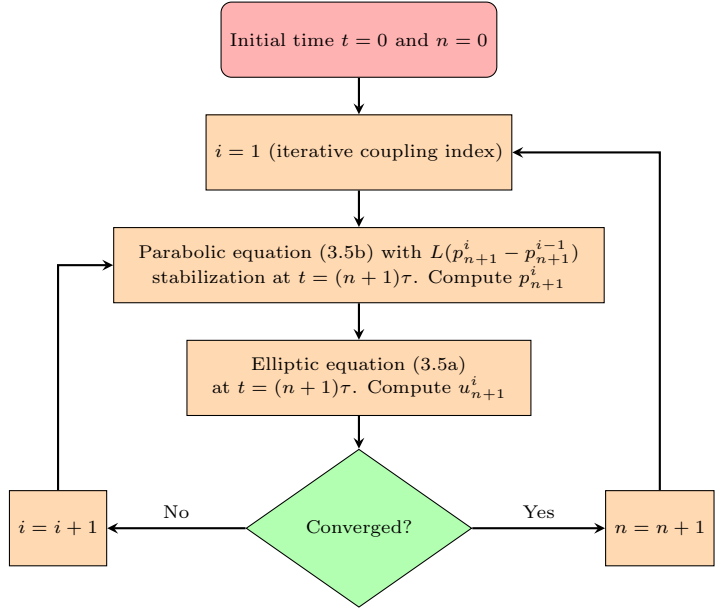
\begin{figure}
    \centering
    \begin{tikzpicture}[node distance=1.5cm]
        \node (start) [startstop] {\scriptsize Initial time $t = 0$ and $n = 0$};
        \node (process1) [process, below of=start] {\scriptsize $i = 1$ (iterative coupling index)};
        \node (process2) [process, below of=process1] {\scriptsize \begin{tabular}{c}
            Parabolic equation \cref{eq:parabolic} with
            $L(p_{n+1}^i - p_{n+1}^{i-1})$ \\ stabilization at $t = (n+1)\tau$. Compute $p_{n+1}^i$
        \end{tabular}};
        \node (process3) [process, below of=process2] {\scriptsize \begin{tabular}{c}
            Elliptic equation \cref{eq:elliptic} \\ at $t = (n+1)\tau$. Compute $u_{n+1}^i$
        \end{tabular}};
        \node (decision1) [decision, below of=process3, yshift = -0.5cm] {\scriptsize Converged?};
        \node (process4) [process, left of=decision1, xshift = -2.5cm] {\scriptsize $i = i+1$};
        \node (process5) [process, right of=decision1, xshift = 2.5cm] {\scriptsize $n = n+1$};
    
        \draw [arrow] (start) -- (process1);
        \draw [arrow] (process1) -- (process2);
        \draw [arrow] (process2) -- (process3);
        \draw [arrow] (process3) -- (decision1);
        \draw [arrow] (decision1) -- node[anchor=south] {\scriptsize No} (process4);
        \draw [arrow] (decision1) -- node[anchor=south] {\scriptsize Yes} (process5);
        \draw [arrow] (process4) |- (process2);
        \draw [arrow] (process5) |- (process1);
    \end{tikzpicture}
    \caption{Flowchart for the FS method described in \Cref{alg1_new}.}
    \label{fig:FS_flowchart}
\end{figure}

\begin{figure}
    \centering
    \begin{tikzpicture}[node distance=1.5cm]
        \node (start) [startstop] {\scriptsize Initial time $t = 0$ and $n = 0$ };
        \node (process1) [process, below of=start] {\scriptsize $i = 1$ (iterative coupling index)};
        \node (process6) [process, below of=process1] {\scriptsize $j = 1$ (parabolic iteration index)};
        \node (process2) [process, below of=process6] {\scriptsize \begin{tabular}{c}
            Parabolic equation \cref{eq:parabolic} with
            $L(p_{n+1}^i - p_{n+1}^{i-1})$ \\ stabilization at $t = (n+1)\tau$. Compute $p_{n+1}^i$
        \end{tabular}};
        \node (decision2) [decision, below of=process2] {\scriptsize $j = q$?};
        \node (process3) [process, below of=decision2, yshift=-0.5cm] {\scriptsize \begin{tabular}{c}
            Elliptic equation \cref{eq:elliptic} \\ at $t = (n+1)\tau$. Compute $u_{n+1}^i$
        \end{tabular}};
        \node (decision1) [decision, below of=process3, yshift = -0.5cm] {\scriptsize Converged?};
        \node (process4) [process, left of=decision1, xshift = -3.5cm] {\scriptsize $i = i+1$};
        \node (process5) [process, right of=decision1, xshift = 3.5cm] {\scriptsize $n = n+1$};
        \node (process7) [process, left of=decision2, xshift = -2cm] {\scriptsize $j = j+1$};
    
        \draw [arrow] (start) -- (process1);
        \draw [arrow] (process1) -- (process6);
        \draw [arrow] (process6) -- (process2);
        \draw [arrow] (process2) -- (decision2);
        \draw [arrow] (decision2) -- node[anchor=west] {\scriptsize Yes} (process3);
        \draw [arrow] (decision2) -- node[anchor=south] {\scriptsize No} (process7);
        \draw [arrow] (process3) -- (decision1);
        \draw [arrow] (decision1) -- node[anchor=south] {\scriptsize No} (process4);
        \draw [arrow] (decision1) -- node[anchor=south] {\scriptsize Yes} (process5);
        \draw [arrow] (process4) |- (process6);
        \draw [arrow] (process5) |- (process1);
        \draw [arrow] (process7) |- (process2);
    \end{tikzpicture}
    \caption{Flowchart for the multirate FS method described in \Cref{alg_2_new}.}
    \label{fig:MFS_flowchart}
\end{figure}
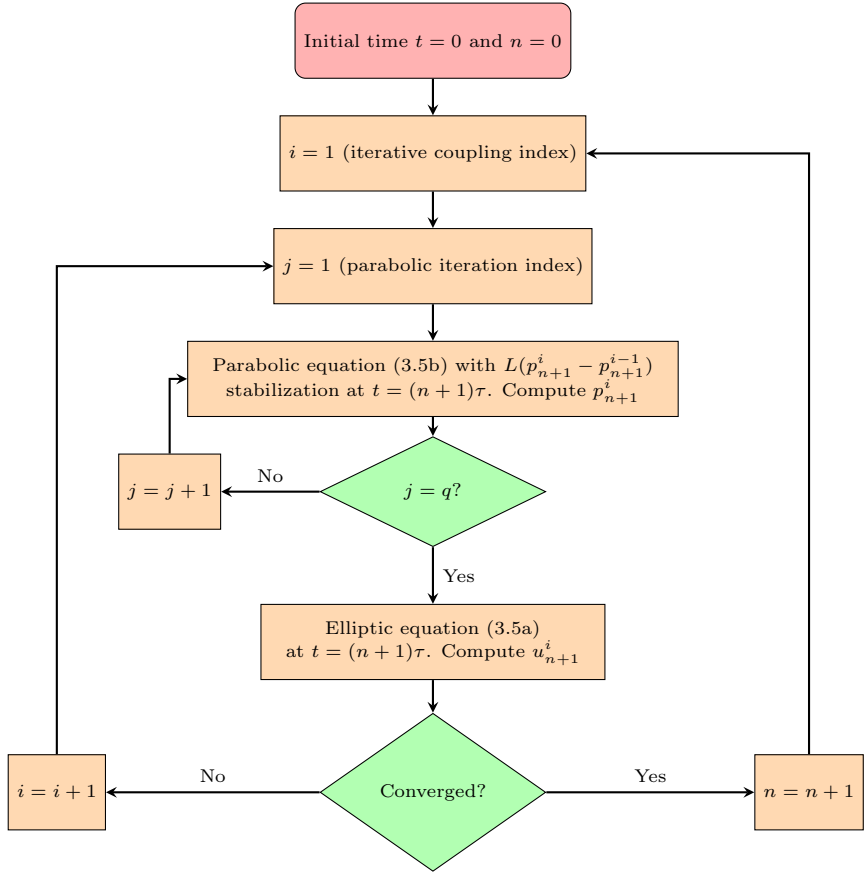

\section{Convergence of Parareal for the coupled problem} \label{sec:contraction}

This section contains the conditions (if required) under which the Parareal algorithm is guaranteed to converge for the problem \cref{eq:biot_system}, provided a suitable vector norm. For the sake of simplicity, no source term will be considered. Recalling the main results that ensure convergence for the Parareal algorithm, stated in \Cref{Prop_MG} and \Cref{prop:contractionv2}, the conditions $\|\Phi_G\|<1$ and $\|M_G-M_F\| < \alpha_{M_G}$ should be guaranteed, where $\alpha_{M_G}$ stands for the coercivity coefficient of $M_G$. The latter condition is generally satisfied for sufficiently close coarse and fine time steps. However, the former condition may be more restrictive. Thus, this section includes the constrains over $\Delta T$ such that $\|\Phi_G\| < 1$ holds. This is proven for the three solvers described in \Cref{subsec:iterative} for the problem \eqref{eq:biot_system}, namely, the monolithic, FS and multirate FS methods.

Before beginning with the proposed properties, let us denote by $\|\cdot\|_{a,b}$ the weighted norm defined by
\begin{equation} \label{eq:weighted_norm}
    \|(u,p)\|^2_{a,b} = a\|u\|^2_2 + b\|p\|^2_2,
\end{equation}
where $a,b \in \mathbb{R}_+$ and $\|\cdot\|_2$ denotes the discrete $\ell_2$-norm. Similarly, we define the discrete energy norm denoted by $\|\cdot\|_e$ as
\begin{equation*}
    \|(u,p)\|_e^2 = (A_{uu}u,u) + c_f\|p\|_2^2 = \|u\|_{A_{uu}}^2 + c_f\|p\|_2^2,
\end{equation*}
where $\|\cdot \|_{A_{uu}}$ represents the induced norm by the scalar product $(A_{uu}\cdot,\cdot)$.

Furthermore, we recall the following result, which is employed later on in this section.

\begin{lemma} \label{lemma}
        For a set of vectors $\{x_j\}_{j\in\{1,\dots,n\}}$ and a vector norm induced by an inner product denoted by $\|\cdot\|$, the following inequality is satisfied:
        \begin{equation*}
            \|x_1 + \cdots + x_n\|^2 \leq n(\|x_1\|^2 + \cdots + \|x_n\|^2).
        \end{equation*}
    \end{lemma}
    \begin{proof} 
        The result follows immediately from the triangle and the Cauchy-Schwarz inequalities.
    \end{proof}

Finally, we remark that the initial condition of the problem must be nonzero for both $p$ and $u$ unknowns. Otherwise, the system of equations would admit only the trivial solution, since source terms are assumed to be zero, making the point of the contractivity analysis meaningless.

\subsection{Monolithic approach and the FS method}
	Let us first consider a single step of the monolithic approach for the semidiscrete problem \cref{eq:biot_system}, i.e.,
	\begin{equation} \label{eq:monolithic}
		\begin{pmatrix}
			A_{uu} & -A_{up} \\ A_{up}^\top & c_fI + \Delta T A_{pp}
		\end{pmatrix} \begin{pmatrix}
		u_{n+1} \\ p_{n+1}
		\end{pmatrix} = \begin{pmatrix}
		0 & 0 \\ A_{up}^\top & c_fI
		\end{pmatrix} \begin{pmatrix}
		u_n \\ p_n
		\end{pmatrix}.
	\end{equation}

    In the following, we prove contractivity for the solution of the monolithic approach for two different norms.
	
	\begin{theorem} \label{thm:monolithic_1}
		Let us consider the monolithic approach \cref{eq:monolithic} as the coarse propagator of the Parareal algorithm. Let us further assume that matrices $A_{uu}$ and $A_{pp}$ are coercive, with coercivity constants $\alpha_{uu}$ and $\alpha_{pp}$, respectively, and that matrix $A_{up}$ satisfies the continuity condition $\|A_{up}\|_2 \leq M_{up}$. Then, the hypothesis stated in \Cref{prop:contractionv2} is satisfied for the weighted norm $\|\cdot \|_{a,b}$ for $a = c_f/2$ and $b = \alpha_{uu}$, if the following constrain is fulfilled:
		\begin{equation} \label{eq:thm_mon_0}
			\Delta T > \frac{M_{up}^2}{4 \alpha_{uu}\alpha_{pp}}.
		\end{equation}
	\end{theorem}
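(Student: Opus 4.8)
The plan is to verify the hypothesis of \cref{prop:contractionv2} for the operator $\Phi_G$ of the single monolithic step \cref{eq:monolithic} directly in the weighted norm \cref{eq:weighted_norm}, i.e.\ to prove that $(u_{n+1},p_{n+1})=\Phi_G(u_n,p_n)$ satisfies $a\|u_{n+1}\|_2^2+b\|p_{n+1}\|_2^2 < a\|u_n\|_2^2+b\|p_n\|_2^2$ for every nonzero $(u_n,p_n)$, with the weights as in the statement. I would work from the two rows of \cref{eq:monolithic}: the elliptic row $A_{uu}u_{n+1}=A_{up}p_{n+1}$ and the parabolic row $A_{up}^\top u_{n+1}+(c_fI+\Delta T A_{pp})p_{n+1}=A_{up}^\top u_n+c_fp_n$. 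Testing the elliptic row with $u_{n+1}$ gives $\langle A_{uu}u_{n+1},u_{n+1}\rangle=\langle A_{up}p_{n+1},u_{n+1}\rangle$, so coercivity of $A_{uu}$ and $\|A_{up}\|_2\le M_{up}$ immediately yield the auxiliary bound $\|u_{n+1}\|_2\le (M_{up}/\alpha_{uu})\|p_{n+1}\|_2$, which ties the elliptic component to the parabolic one.

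The core of the argument is an energy identity obtained by testing the parabolic row with $p_{n+1}$: the coupling contribution satisfies $\langle A_{up}^\top u_{n+1},p_{n+1}\rangle=\langle u_{n+1},A_{up}p_{n+1}\rangle=\langle A_{uu}u_{n+1},u_{n+1}\rangle$ by the elliptic row — this cancellation of the skew-symmetric coupling is exactly what produces dissipation — so
\begin{equation*}
\langle A_{uu}u_{n+1},u_{n+1}\rangle+c_f\|p_{n+1}\|_2^2+\Delta T\langle A_{pp}p_{n+1},p_{n+1}\rangle=\langle A_{up}^\top u_n,p_{n+1}\rangle+c_f\langle p_n,p_{n+1}\rangle .
\end{equation*}
I would then bound the left side below by coercivity, $\langle A_{uu}u_{n+1},u_{n+1}\rangle\ge\alpha_{uu}\|u_{n+1}\|_2^2$ and $\Delta T\langle A_{pp}p_{n+1},p_{n+1}\rangle\ge\Delta T\alpha_{pp}\|p_{n+1}\|_2^2$. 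For the right side, Cauchy--Schwarz and continuity give $\langle A_{up}^\top u_n,p_{n+1}\rangle\le M_{up}\|u_n\|_2\|p_{n+1}\|_2$, and the decisive step is Young's inequality with the weight tuned to the diffusive term, $M_{up}\|u_n\|_2\|p_{n+1}\|_2\le \Delta T\alpha_{pp}\|p_{n+1}\|_2^2+\tfrac{M_{up}^2}{4\Delta T\alpha_{pp}}\|u_n\|_2^2$, so that the $\Delta T\alpha_{pp}\|p_{n+1}\|_2^2$ contribution is absorbed; likewise $c_f\langle p_n,p_{n+1}\rangle\le\tfrac{c_f}{2}\|p_{n+1}\|_2^2+\tfrac{c_f}{2}\|p_n\|_2^2$ with $\tfrac{c_f}{2}\|p_{n+1}\|_2^2$ absorbed into the surviving $c_f\|p_{n+1}\|_2^2$. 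Collecting terms yields an estimate of the weighted form
\begin{equation*}
\alpha_{uu}\|u_{n+1}\|_2^2+\tfrac{c_f}{2}\|p_{n+1}\|_2^2\le \tfrac{M_{up}^2}{4\Delta T\alpha_{pp}}\|u_n\|_2^2+\tfrac{c_f}{2}\|p_n\|_2^2 ,
\end{equation*}
which is precisely a contractivity estimate of the type demanded by \cref{prop:contractionv2} in the norm \cref{eq:weighted_norm}, the constraint $M_{up}^2/(4\Delta T\alpha_{pp})<\alpha_{uu}$ being exactly \cref{eq:thm_mon_0}. Strict decrease when $u_n\neq0$ is then immediate; in the degenerate case $u_n=0$ (with $p_n\neq0$, otherwise the iteration is trivial) one instead retains the full positive term $\Delta T\alpha_{pp}\|p_{n+1}\|_2^2$ rather than absorbing it, which again forces strict decrease. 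Since $\Phi_G$ is linear on a finite-dimensional space, strict decrease of $\|\cdot\|_{a,b}$ on all nonzero vectors is equivalent to $\|\Phi_G\|_{a,b}<1$, and \cref{prop:contractionv2} applies.

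I expect the \textbf{main obstacle} to be bookkeeping rather than conceptual: the two Young parameters must be chosen so that \emph{every} $\|p_{n+1}\|_2^2$ term on the right is matched against the two left-hand parabolic terms $c_f\|p_{n+1}\|_2^2$ and $\Delta T\langle A_{pp}p_{n+1},p_{n+1}\rangle$, \emph{and} so that exactly the constant $M_{up}^2/(4\Delta T\alpha_{pp})$ appears on $\|u_n\|_2^2$ with no spurious cross term $\|u_n\|_2\|p_n\|_2$ — any suboptimal split pushes the threshold strictly above the clean value in \cref{eq:thm_mon_0}. A secondary technical point is the upgrade from the per-vector strict inequality to a genuine operator-norm bound below one, which is where the strictness of \cref{eq:thm_mon_0} is used. \Cref{lemma} is not needed here, but the same template — energy identity, coercive lower bound, Young-type absorption against the diffusive term — will carry over, with \cref{lemma} splitting the extra multirate terms, to the subsequent theorems.
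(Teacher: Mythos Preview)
Your proposal is correct and follows essentially the same approach as the paper: both test \cref{eq:monolithic} with $(u_{n+1},p_{n+1})$ to obtain the same energy identity (your substitution $\langle A_{up}^\top u_{n+1},p_{n+1}\rangle=\langle A_{uu}u_{n+1},u_{n+1}\rangle$ via the elliptic row is exactly the paper's cancellation $(A_{up}^\top u_{n+1},p_{n+1})=(A_{up}p_{n+1},u_{n+1})$), then apply coercivity, Cauchy--Schwarz, and Young's inequality. The only cosmetic difference is the Young parameter: the paper fixes $\delta=2\alpha_{uu}$ so that the $u$-weights match on both sides and the residual $\|p_{n+1}\|^2$ coefficient must be absorbed by $\Delta T\alpha_{pp}$, whereas you tune the split to absorb $\Delta T\alpha_{pp}\|p_{n+1}\|_2^2$ exactly and compare the resulting $\|u_n\|_2^2$ coefficient to $\alpha_{uu}$ --- both yield precisely \cref{eq:thm_mon_0}, and your auxiliary bound $\|u_{n+1}\|_2\le(M_{up}/\alpha_{uu})\|p_{n+1}\|_2$ is correct but never used.
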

	\begin{proof}
        Let us consider the product between the expression \cref{eq:monolithic} and the vector $\begin{pmatrix}
			u_{n+1} \\ p_{n+1}
		\end{pmatrix}$, which leads to the equality
		\begin{equation} \label{eq:thm_mon_3}
		\begin{split}
			c_f\|p_{n+1}\|^2~ + &~ \Delta T (A_{pp}p_{n+1},p_{n+1}) + (A_{up}^\top u_{n+1},p_{n+1}) - (A_{up}p_{n+1},u_{n+1}) \\
			 + &~ (A_{uu}u_{n+1},u_{n+1}) = c_f (p_n, p_{n+1}) + (A_{up}^\top u_n, p_{n+1}).
		\end{split}
		\end{equation}

        Using the Cauchy-Schwarz and Young's inequalities, it follows
        \begin{equation} \label{eq:thm_mon_1}
            c_f(p_n, p_{n+1}) \leq \frac{c_f}{2}\|p_n\|_2^2 + \frac{c_f}{2}\|p_{n+1}\|_2^2.
        \end{equation}
        In addition, applying Young's inequality and the continuity condition for $A_{up}$, it yields
        \begin{equation} \label{eq:thm_mon_2}
            (A_{up}^\top u_n, p_{n+1}) = (u_n, A_{up}p_{n+1} ) \leq \frac{\delta}{2}\|u_n\|_2^2 + \frac{M_{up}^2}{2\delta}\|p_{n+1}\|_2^2,
        \end{equation}
        for $\delta > 0$. Then, recalling that $(A_{up}^\top u_{n+1},p_{n+1}) = (A_{up}p_{n+1},u_{n+1})$, and applying the coercivity properties of $A_{uu}$ and $A_{up}$, together with substituting \cref{eq:thm_mon_1} and \cref{eq:thm_mon_2} in equation \cref{eq:thm_mon_3}, it follows the expression
        \begin{equation*} 
			\frac{c_f}{2}\|p_{n+1}\|^2 + \Delta T \alpha_{pp}\|p_{n+1}\|^2 + \alpha_{uu} \|u_{n+1}\|^2 \leq \frac{c_f}{2}\|p_n\|^2 + \frac{\delta}{2}\|u_n\|^2 + \frac{M_{up}^2}{2\delta}\|p_{n+1}\|^2.
		\end{equation*}
        Rearranging the inequality, it yields
        \begin{equation*}
            \left( \frac{c_f}{2} + \Delta T \alpha_{pp} - \frac{M_{up}^2}{2\delta}\right)\|p_{n+1}\|^2 + \alpha_{uu}\|u_{n+1}\|^2 \leq \frac{c_f}{2}\|p_n\|^2 + \frac{\delta}{2}\|u_n\|^2.
        \end{equation*}

        Then, the solution is contractive if the coefficients from the left-hand side are greater than their counterparts from the right-hand side, that is,
        \begin{equation*}
            \Delta T \alpha_{pp} - \frac{M_{up}^2}{2\delta} > 0 \quad \text{and} \quad \alpha_{uu} \geq \frac{\delta}{2}.
        \end{equation*}
		Finally, by choosing $\delta = 2\alpha_{uu}$, from the given hypothesis \cref{eq:thm_mon_0}, it follows that the previous conditions are fulfilled. Therefore, the following inequality holds, proving the contraction:
        \begin{equation*}
		\begin{split}
			\|(u_{n+1},p_{n+1})\|_{a,b} = & ~\frac{c_f}{2} \|p_{n+1}\|_2^2 + \alpha_{uu}\|u_{n+1}\|_2^2 \\ < & ~\left( \frac{c_f}{2} + \Delta T \alpha_{pp} - \frac{M_{up}^2}{4\alpha_{uu}}\right)\|p_{n+1}\|_2^2 + \alpha_{uu}\|u_{n+1}\|_2^2 
            \\ \leq &~ \frac{c_f}{2}\|p_n\|_2^2 + \alpha_{uu}\|u_n\|_2^2 = \|(u_n,p_n)\|_{a,b}.
		\end{split}
		\end{equation*}
	\end{proof}

Observe that the contraction is guaranteed for the weighted norm only under certain constrain on the coarse time step $\Delta T$. Depending on the specific coupled problem, such a condition may not be demanding. However, we would prefer to have a contraction with respect to a norm that guarantees contraction without any extra requirements for the time step. That is the case for the energy norm $\|\cdot\|_e$.

\begin{theorem} \label{thm:monolithic_2}
    Let us consider the monolithic approach \cref{eq:monolithic} as the coarse propagator of the Parareal algorithm. Then, the hypothesis stated in \Cref{prop:contractionv2} is satisfied for the energy norm $\|\cdot\|_e$.
\end{theorem}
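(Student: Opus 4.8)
The plan is to test the monolithic step \cref{eq:monolithic} against the vector $(u_{n+1},p_{n+1})$ in the inner product that induces $\|\cdot\|_e$, namely $\langle (u,p),(u',p')\rangle_e := (A_{uu}u,u') + c_f(p,p')$, which is a genuine inner product because $A_{uu}$ is symmetric positive definite and $c_f>0$. (The matrix in \cref{eq:monolithic} has symmetric part $\diag(A_{uu},\,c_fI + \Delta T A_{pp})$, which is positive definite, so $\Phi_G$ is well defined.) Taking the $\ell_2$ inner product of the first block row of \cref{eq:monolithic} with $u_{n+1}$ and of the second block row with $p_{n+1}$, and adding, the coupling contributions $-(A_{up}p_{n+1},u_{n+1})$ and $(A_{up}^\top u_{n+1},p_{n+1})$ cancel, leaving
\begin{equation*}
    \|(u_{n+1},p_{n+1})\|_e^2 + \Delta T\,(A_{pp}p_{n+1},p_{n+1}) = (A_{up}^\top u_n,p_{n+1}) + c_f(p_n,p_{n+1}).
\end{equation*}

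The key step --- the one I expect to carry the whole argument --- is to rewrite the surviving coupling term on the right-hand side by means of the elliptic equation itself: with vanishing source the first block row of \cref{eq:monolithic} reads $A_{uu}u_{n+1} = A_{up}p_{n+1}$, hence $(A_{up}^\top u_n,p_{n+1}) = (u_n,A_{up}p_{n+1}) = (u_n,A_{uu}u_{n+1}) = (A_{uu}u_n,u_{n+1})$. Therefore the entire right-hand side equals $\langle (u_n,p_n),(u_{n+1},p_{n+1})\rangle_e$, and the Cauchy-Schwarz inequality for $\langle\cdot,\cdot\rangle_e$ gives
\begin{equation*}
    \|(u_{n+1},p_{n+1})\|_e^2 + \Delta T\,(A_{pp}p_{n+1},p_{n+1}) \le \|(u_n,p_n)\|_e\,\|(u_{n+1},p_{n+1})\|_e .
\end{equation*}
Since $A_{pp}$ is positive definite, the term $\Delta T\,(A_{pp}p_{n+1},p_{n+1})$ is nonnegative for every $\Delta T>0$ --- which is precisely why no constraint on $\Delta T$ is needed --- and it is strictly positive unless $p_{n+1}=0$; in the remaining case $A_{uu}u_{n+1}=A_{up}p_{n+1}=0$ forces $u_{n+1}=0$ as well. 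Hence $\|\Phi_G v\|_e<\|v\|_e$ for every $v\neq\boldsymbol{0}$, and, the underlying space being finite-dimensional, compactness of the unit sphere (the same device used at the end of the proof of \cref{prop:contractionv2}) upgrades this to $\|\Phi_G\|_e<1$, which is the hypothesis of \cref{prop:contractionv2}.

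Should an explicit contraction factor be desired, the final step can be refined: from $A_{uu}u_{n+1}=A_{up}p_{n+1}$, the bound $\|A_{up}\|_2\le M_{up}$ and coercivity of $A_{uu}$ one gets $\|u_{n+1}\|_{A_{uu}}^2\le (M_{up}^2/\alpha_{uu})\,\|p_{n+1}\|_2^2$, so $\|p_{n+1}\|_2^2\ge \big(\alpha_{uu}/(M_{up}^2+c_f\alpha_{uu})\big)\|(u_{n+1},p_{n+1})\|_e^2$; inserting this together with coercivity of $A_{pp}$ into the displayed inequality turns it into $\|(u_{n+1},p_{n+1})\|_e\le\big(1+\Delta T\,\alpha_{pp}\,\alpha_{uu}/(M_{up}^2+c_f\alpha_{uu})\big)^{-1}\|(u_n,p_n)\|_e$, a factor strictly below one. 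The one pitfall to avoid is estimating $(A_{up}^\top u_n,p_{n+1})$ directly through continuity of $A_{up}$, which would make $\ell_2$-norms of $u_n$ appear and reintroduce a lower bound on $\Delta T$ as in \cref{thm:monolithic_1}; inserting the elliptic constraint instead is what makes the bound unconditional.
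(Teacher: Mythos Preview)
Your proof is correct but takes a genuinely different route from the paper's. The paper tests \cref{eq:monolithic} against the vector $(u_{n+1}-u_n,\,p_{n+1})$, uses the polarization identity $(x,x-y)=\tfrac12\|x\|^2+\tfrac12\|x-y\|^2-\tfrac12\|y\|^2$ in the $A_{uu}$-inner product, and controls $c_f(p_n,p_{n+1})$ separately by Young's inequality; the term $\tfrac12\|u_{n+1}-u_n\|_{A_{uu}}^2$ that appears is simply discarded. You instead test against $(u_{n+1},p_{n+1})$ and then substitute the elliptic constraint $A_{uu}u_{n+1}=A_{up}p_{n+1}$ to turn the lone coupling term $(A_{up}^\top u_n,p_{n+1})$ into $(A_{uu}u_n,u_{n+1})$, so that the whole right-hand side becomes $\langle(u_n,p_n),(u_{n+1},p_{n+1})\rangle_e$ and a single Cauchy--Schwarz in the energy inner product finishes the job. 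Your route is tighter --- no positive term is thrown away and you extract an explicit contraction factor --- and it handles the degenerate case $p_{n+1}=0$ (hence $u_{n+1}=0$) plus the passage from strict pointwise inequality to $\|\Phi_G\|_e<1$ via compactness more carefully than the paper, which simply asserts ``by assumption, $p_{n+1}\neq 0$''. The paper's approach, on the other hand, is the classical ``test against the time increment'' energy technique and does not hinge on re-injecting the elliptic equation, so it may transfer more directly to variants where that substitution is unavailable.
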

\begin{proof}
    Let us consider the product between the two sides of equality \cref{eq:monolithic} and the vector $\begin{pmatrix}
			u_{n+1} - u_n \\ p_{n+1}
		\end{pmatrix}$, which results in the expression
        \begin{equation} \label{eq:thm_mon_4}
            \begin{split}
                c_f\|p_{n+1}\|_2^2 & ~ + \Delta T(A_{pp}p_{n+1},p_{n+1}) + (A_{up}^\top u_{n+1},p_{n+1}) - (A_{up}p_{n+1},u_{n+1}-u_n) \\ &~+ (A_{uu}u_{n+1},u_{n+1}-u_n) = c_f(p_n,p_{n+1}) + (A_{up}^\top u_n,p_{n+1}).
            \end{split}
        \end{equation}

    First, recall that $(A_{up}^\top u_{n+1},p_{n+1})-(A_{up}^\top u_{n}, p_{n+1}) = (A_{up}p_{n+1}, u_{n+1}-u_n)$. Moreover, let us consider the identity $(x-y,x) = \frac{1}{2}\|x\|^2 + \frac{1}{2}\|x-y\|^2 - \frac{1}{2}\|y\|^2$, which holds for any induced norm. Then, it follows
    \begin{equation} \label{eq:thm_mon_5}
        (A_{uu}u_{n+1},u_{n+1}-u_n) = \frac{1}{2}\|u_{n+1}\|_{A_{uu}}^2 + \frac{1}{2}\|u_{n+1}-u_n\|_{A_{uu}}^2 - \frac{1}{2}\|u_n\|_{A_{uu}}^2.
    \end{equation}
    Substituting \cref{eq:thm_mon_1} and \cref{eq:thm_mon_5} in equation \cref{eq:thm_mon_4}, it yields the expression
    \begin{equation*}
        \begin{split}
            \frac{c_f}{2}\|p_{n+1}\|_2^ 2&~+ \Delta T\|p_{n+1}\|_{A_{pp}}^2 + \frac{1}{2}\|u_{n+1}\|_{A_{uu}}^2 \\ &~+ \frac{1}{2}\|u_{n+1}-u_n\|_{A_{uu}}^2 \leq \frac{c_f}{2}\|p_n\|_2^2 + \frac{1}{2}\|u_n\|_{A_{uu}}^2,
        \end{split}
    \end{equation*}
    where $\|\cdot\|_{A_{pp}}$ denotes the norm induced by the scalar product $(A_{pp}\cdot,\cdot)$. By assumption, $p_{n+1} \neq 0$, and the following inequality is fulfilled:
    \begin{equation*}
        \begin{split}
            \frac{1}{2}\|(u_{n+1},p_{n+1})\|_{e} = &~ \frac{c_f}{2}\|p_{n+1}\|^2_2 + \frac{1}{2} \|u_{n+1}\|_{A_{uu}}^2 \\ < &~ \frac{c_f}{2}\|p_n\|_2^2 + \frac{1}{2}\|u_n\|_{A_{uu}}^2 = \frac{1}{2}\|(u_n,p_n)\|_e,
        \end{split}
    \end{equation*}
    which proves the claim.
\end{proof}

A straightforward extension of the previous results shows that, for a sufficiently large number of iterations, the Parareal algorithm with the FS method as the coarse propagator satisfies the condition given by \Cref{prop:contractionv2} under the same hypotheses of \Cref{thm:monolithic_1} and \Cref{thm:monolithic_2}.

Let us first rewrite the FS method, described in \Cref{alg1_new}, in its matrix formulation at the $k$-th iteration and time point $t = T_ {n+1}$, that is,
\begin{equation*}
    \begin{split}
    \begin{pmatrix}
        A_{uu} & -A_{up} \\[0.25em] 0 & (c_f + L)I + \Delta T A_{pp}
    \end{pmatrix} &\begin{pmatrix}
        u_{n+1}^{k+1} \\[0.25em] p_{n+1}^{k+1}
    \end{pmatrix} \\ = \begin{pmatrix}
        0 & 0 \\[0.25em] -A_{up}^\top & LI
    \end{pmatrix} \begin{pmatrix}
        u_{n+1}^k \\[0.25em] p_{n+1}^k
    \end{pmatrix} & + \begin{pmatrix}
        0 & 0 \\[0.25em] A_{up}^\top & c_fI
    \end{pmatrix} \begin{pmatrix}
        u_n \\[0.25em] p_n
    \end{pmatrix},
    \end{split}
\end{equation*}
where, as described in \Cref{subsec:iterative}, $L > 0$ is a suitable stabilization parameter. In addition, the initial guess is generally chosen to be $u_{n+1}^0 = u_n$ and $p_{n+1}^0 = p_n$.

Prior to stating the analytical result, let us convey the notation employed hereafter. In the following, let $\Phi_{\text{mon}}$ denote the time step matrix of the monolithic approach, whereas $\Phi_{\text{FS}}$ stands for the iteration matrix of the FS method. Then, the following result can be proven.

\begin{corollary} \label{cor:FS}
    Let us consider the FS method (\Cref{alg1_new}) as the coarse propagator within the Parareal algorithm. Let us further assume that the monolithic method is contractive, that is, the inequality 
    \begin{equation} \label{eq:hyp_FS}
        \|\Phi_{\text{mon}}(u,p)\| <\|(u,p)\|
    \end{equation}
    holds for any nonzero vector $(u,p)$ and a suitable vector norm induced by a scalar product. Then, for a sufficiently large number of iterations $k_{\text{max}}$, there holds
    \begin{equation*}
        \|\Phi_{\text{FS}}^{k_{\text{max}}}(u,p)\| < \|(u,p)\|
    \end{equation*}
    for any nonzero vector $(u,p)$.
\end{corollary}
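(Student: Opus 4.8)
The plan is to deduce the corollary from \Cref{thm:monolithic_1}/\Cref{thm:monolithic_2} by a perturbation argument: the FS time-step propagator using $k$ coupling iterations, call it $\Phi_{\text{FS}}^{k}$, converges as $k\to\infty$ to the monolithic one-step map $\Phi_{\text{mon}}$, and hypothesis \cref{eq:hyp_FS} upgrades (by compactness) to $\|\Phi_{\text{mon}}\|<1$ strictly; hence for $k_{\text{max}}$ large enough $\Phi_{\text{FS}}^{k_{\text{max}}}$ is still a strict contraction. To make this precise, I would write the matrix form of \Cref{alg1_new} displayed above (with source dropped, as throughout \Cref{sec:contraction}) as the stationary iteration $w^{i}=T\,w^{i-1}+M_1^{-1}N_2\,v_n$, where $v_n=(u_n^\top,p_n^\top)^\top$, the matrices $M_1,N_1,N_2$ are the three blocks appearing there and $T:=M_1^{-1}N_1$. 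Starting from the natural initial guess $w^{0}=v_n$, after $k$ sweeps one obtains the linear map $\Phi_{\text{FS}}^{k}=T^{k}+\bigl(\sum_{j=0}^{k-1}T^{j}\bigr)M_1^{-1}N_2$. Since $M_1-N_1$ is exactly the coefficient matrix of \cref{eq:monolithic} and $N_2$ its right-hand-side matrix, one has $(I-T)^{-1}M_1^{-1}N_2=(M_1-N_1)^{-1}N_2=\Phi_{\text{mon}}$, and a short manipulation gives the clean identity $\Phi_{\text{FS}}^{k}-\Phi_{\text{mon}}=T^{k}\,(I-\Phi_{\text{mon}})$.

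The one genuine ingredient is the bound $\rho(T)<1$, i.e.\ convergence of the FS sweep for a suitable stabilization parameter $L$. Eliminating $u^{i}_{n+1}=A_{uu}^{-1}A_{up}p^{i}_{n+1}$ (step~5 of \Cref{alg1_new}), the sweep reduces for $i\ge1$ to the pressure-only iteration with matrix $S^{-1}(LI-B)$, where $S=(c_f+L)I+\Delta T A_{pp}$ and $B=A_{up}^\top A_{uu}^{-1}A_{up}$; moreover the nonzero eigenvalues of $T$ and of $S^{-1}(LI-B)$ coincide, so $\rho(T)$ equals the spectral radius of $S^{-1}(LI-B)$. Because $A_{uu}$ is symmetric positive definite, $B$ is symmetric positive semidefinite, and $S^{-1}(LI-B)$ is similar to the symmetric matrix $S^{-1/2}(LI-B)S^{-1/2}$, whose spectral norm is below $1$ as soon as $(2L+c_f)I+\Delta T A_{pp}-B$ is positive definite, e.g.\ for $L>\tfrac12(\|B\|_2-c_f)$ — the analogue of the classical fixed-stress estimate (cf.\ \cite{Both2017,Storvik2019}). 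Granting $\rho(T)<1$, the spectral radius formula gives $\|T^{k}\|\to0$, hence $\|\Phi_{\text{FS}}^{k}-\Phi_{\text{mon}}\|\le\|T^{k}\|\,\|I-\Phi_{\text{mon}}\|\to0$ in the matrix norm induced by $\|\cdot\|$.

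It then remains to quantify \cref{eq:hyp_FS} and conclude. Arguing exactly as at the end of the proof of \Cref{prop:contractionv2}, continuity of $v\mapsto\|\Phi_{\text{mon}}v\|$ and compactness of the unit sphere in finite dimension give $\rho_*:=\max_{\|v\|=1}\|\Phi_{\text{mon}}v\|<1$, i.e.\ $\|\Phi_{\text{mon}}\|=\rho_*<1$. Picking $k_{\text{max}}$ with $\|\Phi_{\text{FS}}^{k_{\text{max}}}-\Phi_{\text{mon}}\|<1-\rho_*$, the triangle inequality yields $\|\Phi_{\text{FS}}^{k_{\text{max}}}\|\le\|\Phi_{\text{mon}}\|+\|\Phi_{\text{FS}}^{k_{\text{max}}}-\Phi_{\text{mon}}\|<1$, so that $\|\Phi_{\text{FS}}^{k_{\text{max}}}(u,p)\|\le\|\Phi_{\text{FS}}^{k_{\text{max}}}\|\,\|(u,p)\|<\|(u,p)\|$ for every nonzero $(u,p)$, which is the assertion.

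I expect the main obstacle to be precisely the verification $\rho(T)<1$: convergence of the coupling iteration is not automatic and rests both on choosing $L$ large enough and — were one to state the corollary for the fully abstract system \cref{eq:biot_system} rather than for \cref{eq:biot_discr_space} — on structural and coercivity hypotheses replacing ``$A_{uu}$ symmetric positive definite''. A secondary point worth recording is that $k_{\text{max}}$ depends on $\rho(T)$, hence on $L$ and, through $S$, on $\Delta T$: the statement is purely qualitative, and a poor choice of $L$ or $\Delta T$ merely inflates the number of coupling iterations needed for contractivity.
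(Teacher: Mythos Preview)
Your argument is correct and shares the paper's core idea --- push $\Phi_{\text{FS}}^{k}$ toward $\Phi_{\text{mon}}$ and invoke the triangle inequality --- but you carry it further than the paper does. The paper's proof is three lines: fix a nonzero $(u,p)$, set $\varepsilon=\|(u,p)\|-\|\Phi_{\text{mon}}(u,p)\|>0$, cite convergence of the FS iteration to pick $k_{\max}$ with $\|\Phi_{\text{FS}}^{k_{\max}}(u,p)-\Phi_{\text{mon}}(u,p)\|<\varepsilon$, and conclude by the triangle inequality. It neither verifies $\rho(T)<1$ (this is simply assumed from the cited fixed-stress literature) nor passes to operator norms, so strictly speaking its $k_{\max}$ depends on the chosen $(u,p)$. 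Your version fixes this: the compactness step $\|\Phi_{\text{mon}}\|=\rho_*<1$ and the operator-norm bound $\|\Phi_{\text{FS}}^{k}-\Phi_{\text{mon}}\|\le\|T^{k}\|\,\|I-\Phi_{\text{mon}}\|\to0$ give a single $k_{\max}$ valid for all vectors. The explicit identity $\Phi_{\text{FS}}^{k}-\Phi_{\text{mon}}=T^{k}(I-\Phi_{\text{mon}})$ and the pressure-Schur reduction to bound $\rho(T)$ are genuine additions absent from the paper; they make the statement self-contained and clarify exactly where the choice of $L$ enters.
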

\begin{proof}
    By the hypothesis \cref{eq:hyp_FS}, there must exist a positive scalar $\varepsilon > 0$ such that
    \begin{equation} \label{eq:cor_FS_1}
        \|\Phi_{\text{mon}}(u,p)\| + \varepsilon = \|(u,p)\|.
    \end{equation}
    On the other hand, since the FS method converges to the solution provided by the monolithic approach, there exists $k_{\text{max}} \in \mathbb{N}$ such that
    \begin{equation} \label{eq:cor_FS_2}
        \|\Phi_{\text{FS}}^{k_{\text{max}}}(u,p) - \Phi_{\text{mon}}(u,p)\| < \varepsilon.
    \end{equation}
    Then, it is straightforward to show the following inequality:
    \begin{equation*}
        \|\Phi_{\text{FS}}^{k_{\text{max}}}(u,p)\| < \|\Phi_{\text{mon}}(u,p)\| + \varepsilon =\|(u,p)\|,
    \end{equation*}
    which completes the proof.
\end{proof}

\begin{remark}
    Note that, from \cref{eq:cor_FS_1}, it yields that the more the approximation for the monolithic approach contracts, the less restrictive the condition \cref{eq:cor_FS_2} will be, enabling the value of $k_{\text{max}}$ to be smaller.
\end{remark}

\subsection{Multirate FS method}
As explained in \Cref{subsec:iterative}, by employing multirate, each time step integration of the elliptic equation involves $q$ consecutive numerical integrations of the parabolic equation. Therefore, both matrix formulation and the definition of the weighted norm \cref{eq:weighted_norm} vary slightly with respect to the single rate setting.

Let us first formulate the direct method given by \cref{eq:fully_discrete_problem_multirare} and the multirate FS method (cf. \Cref{alg_2_new}) in their matrix formulation. For that purpose, we introduce new notation for the sake of compactness. In particular, let us consider the notation
\begin{equation*}
    \boldsymbol{P}_{n,q} = \begin{pmatrix}
        p_{n+1} \\ p_{n+2} \\ \vdots \\ p_{n+q}
    \end{pmatrix}, \quad A_{11} = I_q \otimes (c_f q I + \Delta T A_{pp})-c_f q D_q\otimes I,
\end{equation*}
$A_{01} = -e_q^\top \otimes A_{up}$, $A_{10} = \boldsymbol{1}_q \otimes A_{up}^\top$, and $B_{11} = c_f q e_1$, where $I_q$ is the $q\times q$ identity matrix, $D_q$ represents the matrix of size $q\times q$ with entries at the diagonal below the main diagonal equal to 1, and zero otherwise, $\boldsymbol{1}_q$ denotes the vector of size $q$ with all the entries equal to one, and $\{e_j\}_{j\in\{1,\dots,q\}}$ is the canonical basis of the space $\mathbb{R}^q$. Then, the matrix formulation of the method \cref{eq:fully_discrete_problem_multirare} at $t = T_{n+q}$ reads:
\begin{equation} \label{eq:matrix_multirate_direct}
    \begin{pmatrix}
        A_{uu} & A_{01} \\ A_{10} & A_{11}
    \end{pmatrix} \begin{pmatrix}
         u_{n+q} \\ \boldsymbol{P}_{n,q}
    \end{pmatrix} = \begin{pmatrix}
        0 & 0 \\ A_{10} & B_{11}
    \end{pmatrix} \begin{pmatrix}
        u_n \\ p_n
    \end{pmatrix}.
\end{equation}

In turn, for the matrix formulation of the multirate FS iterative scheme, we further introduce the following matrices:
\begin{equation*}
    A_{11}^L = I_q \otimes (q(c_f + L)I + \Delta T A_{pp}) - q(c_f+L)D_q \otimes I
\end{equation*}
and $B_{11}^L = qL(I_q - D_q)\otimes I$, which lead us to the following formulation for the $i$-th iteration of the multirate FS method at $T = T_{n+q}$:
\begin{equation} \label{eq:mat_multirate}
    \begin{pmatrix}
        A_{uu} & A_{10} \\ 0 & A_{11}^L
    \end{pmatrix} \begin{pmatrix}
         u^i_{n+q} \\ \boldsymbol{P}^i_{n,q}
    \end{pmatrix} = \begin{pmatrix}
        0 & 0 \\ -A_{10} & B_{11}^L
    \end{pmatrix} \begin{pmatrix}
         u^{i-1}_{n+q} \\ \boldsymbol{P}^{i-1}_{n,q}
    \end{pmatrix} + \begin{pmatrix}
        0 & 0 \\ A_{10} & B_{11}
    \end{pmatrix} \begin{pmatrix}
        u_n \\ p_n
    \end{pmatrix}.
\end{equation}

Likewise, let us define a new weighted norm $\|\cdot\|_{a,\textbf{b}}$, where $a \in \mathbb{R}_+$ and $\textbf{b} = (b_1,\dots,b_q) \in \mathbb{R}^q_+$. In particular, the norm $\|\cdot \|_{a,\textbf{b}}$ is defined by
\begin{equation*}
    \|(u_{n+q},\boldsymbol{P}_{n,q})\|_{a,\textbf{b}} =  a\|u_{n+q}\|_2^2 + \sum_{j=1}^q b_j\|p_{n+j}\|_2^2.
\end{equation*}
Observe that the norm comprises now the values of the unknowns $p_{n+j}$ at the intermediate points where only the parabolic equation is evaluated.

In this section, we proceed as in the previous one, that is, we first show the constrains that enable the Parareal algorithm with the method \cref{eq:matrix_multirate_direct} as coarse propagator to achieve convergence. Then, the result is extended to the iterative counterpart \cref{eq:mat_multirate}.

\begin{theorem} \label{thm:multirate}
    Let us consider the multirate method \cref{eq:matrix_multirate_direct} as the coarse propagator within the Parareal algorithm. Let us further assume that $A_{uu}$ and $A_{pp}$ are coercive, with constants $\alpha_{uu}$ and $\alpha_{pp}$, respectively, and that $A_{up}$ satisfies the continuity condition $\|A_{up}\|_2 \leq M_{up}$. Then, the hypothesis stated in \Cref{prop:contractionv2} is fulfilled for the weighted norm $\|\cdot\|_{\textbf{a},b}$ for $a = \frac{2q-1}{4q-3}\alpha_{uu}$, $b_j = \varepsilon$, with $j \in \{1,\dots,q-1\}$, and $b_q = \frac{q c_f}{2}$, if the following constrain holds:
    \begin{equation} \label{eq:thm_multirate_0}
        \Delta T > \frac{(4q-3)M_{up}^2}{4\alpha_{uu}\alpha_{pp}},
    \end{equation}
    where $\displaystyle \varepsilon = \Delta T \alpha_{pp} - \frac{4q-3}{4\alpha_{uu}}M_{up}^2$.
\end{theorem}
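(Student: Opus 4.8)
The plan is to follow the strategy of \Cref{thm:monolithic_1}: I would test the source-free block system \cref{eq:matrix_multirate_direct} against its own solution $(u_{n+q},\boldsymbol{P}_{n,q})$, telescope the discrete time-derivative part, and then redistribute the off-diagonal coupling contributions by Cauchy--Schwarz, \Cref{lemma} and Young's inequality so that the weights of the resulting quadratic form land exactly on $a$, $b_j=\varepsilon$ for $j<q$, and $b_q=\tfrac{qc_f}{2}$. Expanding the Kronecker blocks (with $A_{01}\boldsymbol{P}_{n,q}=-A_{up}p_{n+q}$, $A_{10}w=\boldsymbol{1}_q\otimes(A_{up}^\top w)$, the subdiagonal shift carried by $D_q$ inside $A_{11}$, and $B_{11}p_n=c_fq\,e_1p_n$), the antisymmetric pair $(A_{up}^\top u_{n+q},p_{n+q})-(A_{up}p_{n+q},u_{n+q})$ cancels, leaving $\sum_{j=1}^{q-1}(A_{up}^\top u_{n+q},p_{n+j})$ as the only coupling term on the left and $\sum_{j=1}^{q}(A_{up}^\top u_n,p_{n+j})$ on the right; writing $p_{n+0}:=p_n$ and using the identity $(x-y,x)=\tfrac12\|x\|_2^2+\tfrac12\|x-y\|_2^2-\tfrac12\|y\|_2^2$, the $c_fq$ terms telescope into $\tfrac{c_fq}{2}(\|p_{n+q}\|_2^2-\|p_n\|_2^2)+\tfrac{c_fq}{2}\sum_{j=1}^q\|p_{n+j}-p_{n+j-1}\|_2^2$. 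This produces the energy identity
\begin{equation*}
\begin{split}
(A_{uu}u_{n+q},u_{n+q})&+\sum_{j=1}^{q-1}(A_{up}^\top u_{n+q},p_{n+j})+\Delta T\sum_{j=1}^{q}(A_{pp}p_{n+j},p_{n+j})\\
&+\frac{c_fq}{2}\|p_{n+q}\|_2^2+\frac{c_fq}{2}\sum_{j=1}^{q}\|p_{n+j}-p_{n+j-1}\|_2^2=\frac{c_fq}{2}\|p_n\|_2^2+\sum_{j=1}^{q}(A_{up}^\top u_n,p_{n+j}).
\end{split}
\end{equation*}

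Next I would bound the right side from above and the left coupling term from below: discard the nonnegative term $\tfrac{c_fq}{2}\sum_j\|p_{n+j}-p_{n+j-1}\|_2^2$, apply coercivity of $A_{uu}$ and $A_{pp}$, and use $\|A_{up}\|_2\le M_{up}$. For the left coupling term I would combine Cauchy--Schwarz with \Cref{lemma} (to bound $\|\sum_{j<q}p_{n+j}\|_2^2\le(q-1)\sum_{j<q}\|p_{n+j}\|_2^2$) and Young's inequality with a parameter $\delta>0$; on the right I would split off the $j=q$ summand and treat it by Young's inequality with a parameter $r_q>0$, so that it gets absorbed into the comparatively large weight $\tfrac{c_fq}{2}$ of $\|p_{n+q}\|_2^2$, while handling $\sum_{j<q}(A_{up}^\top u_n,p_{n+j})$ by Cauchy--Schwarz, \Cref{lemma} and Young's inequality with a parameter $r>0$. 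Collecting terms gives an inequality of the form
\begin{equation*}
\begin{split}
\Big(\alpha_{uu}-\tfrac{\delta}{2}\Big)\|u_{n+q}\|_2^2&+\Big(\Delta T\alpha_{pp}-\tfrac{(q-1)M_{up}^2}{2\delta}-\tfrac{(q-1)M_{up}^2}{2r}\Big)\sum_{j=1}^{q-1}\|p_{n+j}\|_2^2\\
&+\Big(\Delta T\alpha_{pp}+\tfrac{c_fq}{2}-\tfrac{M_{up}^2}{2r_q}\Big)\|p_{n+q}\|_2^2\le\frac{c_fq}{2}\|p_n\|_2^2+\Big(\tfrac r2+\tfrac{r_q}{2}\Big)\|u_n\|_2^2.
\end{split}
\end{equation*}

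It then remains to choose the free parameters. I would take $\delta=r=\dfrac{4(q-1)\alpha_{uu}}{4q-3}$ and $r_q=\dfrac{2\alpha_{uu}}{4q-3}$; a short computation shows that the coefficient of $\|u_{n+q}\|_2^2$ equals $a$, the coefficient of $\|u_n\|_2^2$ (namely $\tfrac r2+\tfrac{r_q}{2}$) also equals $a$, the coefficient of $\sum_{j<q}\|p_{n+j}\|_2^2$ equals $\Delta T\alpha_{pp}-\tfrac{4q-3}{4\alpha_{uu}}M_{up}^2=\varepsilon$, the coefficient of $\|p_{n+q}\|_2^2$ equals $\varepsilon+\tfrac{c_fq}{2}$, and the coefficient of $\|p_n\|_2^2$ is exactly $b_q=\tfrac{c_fq}{2}$. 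Since the hypothesis \cref{eq:thm_multirate_0} is precisely $\varepsilon>0$ (which also makes every weight of $\|\cdot\|_{a,\textbf{b}}$ positive), this gives $\|(u_{n+q},\boldsymbol{P}_{n,q})\|_{a,\textbf{b}}^2+\varepsilon\|p_{n+q}\|_2^2\le a\|u_n\|_2^2+\tfrac{c_fq}{2}\|p_n\|_2^2$, and using $\varepsilon>0$ with $p_{n+q}\neq0$ (the data being nonzero) I would conclude
\begin{equation*}
\|(u_{n+q},\boldsymbol{P}_{n,q})\|_{a,\textbf{b}}^2<a\|u_n\|_2^2+\frac{c_fq}{2}\|p_n\|_2^2\le\|(u_n,\boldsymbol{P}_{n-q,q})\|_{a,\textbf{b}}^2,
\end{equation*}
that is, $\|\Phi_G\|<1$ for the matrix norm induced by $\|\cdot\|_{a,\textbf{b}}$, so that \Cref{prop:contractionv2} applies.

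The delicate points should be, first, the bookkeeping in the initial step — the shift matrix $D_q$ together with the ``last-index'' coupling structure of $A_{01}$ and $A_{10}$ makes it easy to misplace a telescoping term — and, more essentially, the balancing of the Young parameters: because the intermediate weights $b_j=\varepsilon$ for $j<q$ are exactly the residual $\Delta T\alpha_{pp}-\tfrac{4q-3}{4\alpha_{uu}}M_{up}^2$, the splitting must be arranged so that almost all of the coupling-induced loss is charged to $\|p_{n+q}\|_2^2$, whose large weight $\tfrac{c_fq}{2}$ can absorb it; a uniform distribution of the parameters does not close the argument.
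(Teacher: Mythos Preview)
Your proof is correct and follows essentially the same route as the paper: test \cref{eq:matrix_multirate_direct} against $(u_{n+q},\boldsymbol{P}_{n,q})$, telescope the $c_fq$ terms, apply coercivity, and balance the coupling via Young's inequality so that the weights land exactly on $a$, $\varepsilon$, and $\tfrac{qc_f}{2}$.

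The only organizational difference is in how the coupling is packaged. The paper writes the off-diagonal contribution as $\sum_{j=1}^{q-1}(A_{up}p_{n+j},u_{n+q}-u_n)-(A_{up}p_{n+q},u_n)$, applies \Cref{lemma} to $\|u_{n+q}-u_n\|_2^2$, and uses a \emph{single} Young parameter $\delta=\tfrac{2\alpha_{uu}}{4q-3}$ for every term; you instead keep the $u_{n+q}$ and $u_n$ pieces separate, apply \Cref{lemma} to $\|\sum_{j<q}p_{n+j}\|_2^2$, and introduce three parameters $\delta,r,r_q$. Your choices $\delta=r=\tfrac{4(q-1)\alpha_{uu}}{4q-3}$ and $r_q=\tfrac{2\alpha_{uu}}{4q-3}$ reproduce exactly the same coefficients as the paper's single $\delta$, so the two arguments are equivalent. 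Your closing heuristic remark that ``a uniform distribution of the parameters does not close the argument'' is therefore not quite right --- the paper's uniform-$\delta$ splitting does close --- but this is commentary, not part of the proof, and does not affect its validity.
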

\begin{proof}
    Let us take the product between the expression \cref{eq:matrix_multirate_direct} and the vector $\displaystyle \begin{pmatrix}
        u_{n+q} \\ \boldsymbol{P}_{n,q}
    \end{pmatrix}$, yielding the equality
    \begin{equation} \label{eq:thm_multirate_1}
        \begin{split}
            (A_{uu}u_{n+q},u_{n+q}) & + \sum_{j=1}^q (A_{up} p_{n+j},u_{n+q}-u_n) - (A_{up}p_{n+q},u_n) \\ & + c_f q \sum_{j=1}^q (p_{n+j}-p_{n+j-1},p_{n+j}) + \Delta T \sum_{j=1}^q (A_{pp}p_{n+j},p_{n+j}) = 0.
        \end{split}
    \end{equation}

    By the property $(x-y,x)=\frac{1}{2}\|x\|_2^2 + \frac{1}{2}\|x-y\|_2^2 - \frac{1}{2}\|y\|_2^2$, it follows
    \begin{equation} \label{eq:thm_multirate_2}
        \begin{split}
            \sum_{j=1}^q (p_{n+j}-p_{n+j-1},p_{n+j}) & = \frac{1}{2}\sum_{j=1}^q(\|p_{n+j}\|_2^2 + \|p_{n+j} - p_{n+j-1}\|_2^2 - \|p_{n+j-1}\|_2^2) \\ & \geq \frac{1}{2}\sum_{j=1}^q (\|p_{n+j}\|_2^2 - \|p_{n+j-1}\|_2^2) = \frac{1}{2}(\|p_{n+q}\|_2^2 - \|p_n\|_2^2),
        \end{split}
    \end{equation}
    noting that the defined sum is in fact a telescopic sum.

    On the other hand, applying Young's inequality and continuity properties, it yields
    \begin{equation} \label{eq:thm_multirate_3}
        \sum_{j=1}^{q-1} (A_{up} p_{n+j}, u_n - u_{n+q}) \leq \sum_{j=1}^{q-1} \left( \frac{M_{up}^2}{2\delta} \|p_{n+j}\|_2^2 + \delta \left(\|u_n\|_2^2 + \|u_{n+q}\|_2^2\right)\right),
    \end{equation}
    where $\delta > 0$ is a tuning parameter. Observe that \Cref{lemma} has been applied to the term $\|u_n - u_{n+q}\|_2^2$. Similarly, the following inequality can be concluded:
    \begin{equation} \label{eq:thm_multirate_4}
        (A_{up} p_{n+q},u_n) \leq \frac{M_{up}^2}{2\delta} \|p_{n+q}\|_2^2 + \frac{\delta}{2}\|u_n\|_2^2.
    \end{equation}

    Then, applying the coercivity properties of $A_{uu}$ and $A_{pp}$, together with replacing expressions \cref{eq:thm_multirate_2}, \eqref{eq:thm_multirate_3}, and \cref{eq:thm_multirate_4} in equation \cref{eq:thm_multirate_1}, it follows
    \begin{equation*}
        \begin{split}
            (\alpha_{uu} - \delta(q &-1))\|u_{n+q}\|_2^2 + \left( \Delta T \alpha_{pp} - \frac{M_{up}^2}{2\delta} \right) \sum_{j=0}^{q-1} \|p_{n+j}\|_2^2 \\ &+ \left( \frac{c_fq}{2} + \Delta T \alpha_{pp} - \frac{M_{up}^2}{2\delta} \right) \|p_{n+q}\|_2^2 \leq \frac{c_fq}{2}\|p_n\|_2^2 + \frac{2q-1}{2}\delta \|u_n\|_2^2.
        \end{split}
    \end{equation*}

    The solution is contractive if the coefficients from the left-hand side are greater than their counterparts from the right-hand side, i.e., 
    \begin{equation*}
        \Delta T \alpha_{pp} - \frac{M_{up}^2}{2\delta} > 0 \quad \text{and} \quad \alpha_{uu} - \delta (q-1) \geq \frac{2q-1}{2}\delta.
    \end{equation*}
    In order to hold the second inequality, let us set $\displaystyle \delta = \frac{2\alpha_{uu}}{4q-3}$, and by the hypothesis \cref{eq:thm_multirate_0}, it is concluded that the first inequality also holds. Thus, the following inequality yields
    \begin{equation*}
        \begin{split}
            \|(u_{n+q}, \boldsymbol{P}_{n,q})\|_{a,\textbf{b}} & = \frac{2q-1}{4q-3} \alpha_{uu} \|u_{n+q}\|_2^2 + \sum_{j=1}^{q-1}\varepsilon \|p_{n+j}\|_2^2 + \frac{q c_f}{2}\|p_{n+q}\|_2^2 \\
            & < \frac{2q-1}{4q-3} \alpha_{uu} \|u_n\|_2^2 + \frac{q c_f}{2}\|p_{n}\|_2^2 \leq \|(u_{n}, \boldsymbol{P}_{n-1,q})\|_{a,\textbf{b}},
        \end{split}
    \end{equation*}
    which proves the claim.
\end{proof}

\begin{remark}
    Observe that, if we consider $q = 1$ (that is, the single rate monolithic method), the inequality \cref{eq:thm_multirate_0} is equivalent to the one stated in \cref{eq:thm_mon_0} for the monolithic approach.
\end{remark}

\begin{remark}
    The larger we set $q$, the more restrictive the condition \cref{eq:thm_multirate_0} becomes. However, we must highlight that $\Delta T$ denotes the time step assigned to the elliptic problem, whereas the time step employed for the integration of the parabolic equation is given by $\Delta T/q$. This means that, even though $\Delta T$ should be increased for larger $q$ in order to guarantee convergence, the finer time steps employed by the parabolic equation do not change as abruptly as $\Delta T$.
\end{remark}

Finally, it can be proven that, for a sufficiently large number of iterations, the multirate FS method chosen as the coarse propagator within the Parareal algorithm guarantees convergence for the same norm as the one defined in \Cref{thm:multirate}. This result is based on the idea that the multirate FS method converges to the method \cref{eq:matrix_multirate_direct}. Before stating the result, let us denote by $\Phi_{\text{MD}}$ the time iteration matrix obtained for the method \cref{eq:matrix_multirate_direct}, whereas $\Phi_{\text{MFS}}$ denotes the iteration matrix of the multirate FS method \cref{eq:mat_multirate}.

\begin{corollary}
    Let us consider the multirate FS method (\Cref{alg_2_new}) as the coarse propagator within the Parareal algorithm. Let us further assume that the multirate method \cref{eq:fully_discrete_problem_multirare} is contractive, that is, the inequality
    \begin{equation*}
        \|\Phi_{\text{MD}}(u,\boldsymbol{P})\| < \|(u,\boldsymbol{P})\|
    \end{equation*}
    holds for any nonzero vector $(u,\boldsymbol{P})$ and a suitable vector norm induced by a scalar product. Then, for a sufficiently large number of iterations $k_{\text{max}}$, there holds
    \begin{equation*}
        \|\Phi_{\text{MFS}}^{k_{\text{max}}} (u,\boldsymbol{P})\| < \|(u,\boldsymbol{P})\|,
    \end{equation*}
    for any nonzero vector $(u,\boldsymbol{P})$.
\end{corollary}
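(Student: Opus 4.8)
The plan is to repeat, almost verbatim, the argument used for Corollary~\ref{cor:FS}, with the monolithic propagator $\Phi_{\text{mon}}$ replaced by the multirate direct propagator $\Phi_{\text{MD}}$, the FS iteration matrix $\Phi_{\text{FS}}$ replaced by the multirate FS iteration matrix $\Phi_{\text{MFS}}$, and the pair $(u,p)$ replaced by $(u,\boldsymbol{P})$. The only genuinely new input required is the fact that the multirate FS scheme \cref{eq:mat_multirate} converges, as the inner iteration index grows, to the solution of the multirate direct method \cref{eq:matrix_multirate_direct}; in the source-free setting this says precisely that $\Phi_{\text{MFS}}^{k}\to\Phi_{\text{MD}}$ in the induced matrix norm as $k\to\infty$.

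Concretely, I would proceed in three steps. First, fix a nonzero vector $(u,\boldsymbol{P})$ and use the contractivity hypothesis to set $\varepsilon := \|(u,\boldsymbol{P})\| - \|\Phi_{\text{MD}}(u,\boldsymbol{P})\| > 0$, so that $\|\Phi_{\text{MD}}(u,\boldsymbol{P})\| + \varepsilon = \|(u,\boldsymbol{P})\|$. Second, by the convergence of the multirate FS iteration to the multirate direct solution, pick $k_{\text{max}}\in\mathbb{N}$ with $\|\Phi_{\text{MFS}}^{k_{\text{max}}}(u,\boldsymbol{P}) - \Phi_{\text{MD}}(u,\boldsymbol{P})\| < \varepsilon$. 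Third, the triangle inequality gives
\begin{equation*}
    \|\Phi_{\text{MFS}}^{k_{\text{max}}}(u,\boldsymbol{P})\| \le \|\Phi_{\text{MFS}}^{k_{\text{max}}}(u,\boldsymbol{P}) - \Phi_{\text{MD}}(u,\boldsymbol{P})\| + \|\Phi_{\text{MD}}(u,\boldsymbol{P})\| < \|(u,\boldsymbol{P})\|,
\end{equation*}
which is the claimed contraction. To obtain a single $k_{\text{max}}$ valid for every nonzero $(u,\boldsymbol{P})$, one restricts by homogeneity to the compact unit sphere: the map $(u,\boldsymbol{P})\mapsto \|(u,\boldsymbol{P})\| - \|\Phi_{\text{MD}}(u,\boldsymbol{P})\|$ is continuous and strictly positive there, hence bounded below by some $\varepsilon_0 > 0$, and any $k_{\text{max}}$ with $\|\Phi_{\text{MFS}}^{k_{\text{max}}} - \Phi_{\text{MD}}\| < \varepsilon_0$ then works uniformly.

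The main obstacle is therefore not in the chain of inequalities above — which is elementary — but in justifying the second step, i.e.\ that the inner multirate FS iteration in \cref{eq:mat_multirate} indeed converges to the multirate direct solution \cref{eq:matrix_multirate_direct}. Equivalently, one needs the spectral radius of the iteration matrix associated with \cref{eq:mat_multirate} to be strictly less than one for a suitable stabilization parameter $L$; this is exactly the convergence result established in \cite{almani2016convergence} for the quasi-static Biot model, and it carries over to the block structure of \cref{eq:fully_discrete_problem_multirare}. Once this is in hand, the corollary follows in complete analogy with Corollary~\ref{cor:FS}.
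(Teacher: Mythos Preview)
Your proposal is correct and follows essentially the same approach as the paper, which simply states that the proof proceeds exactly as for Corollary~\ref{cor:FS}. In fact, you supply more detail than the paper does, including the compactness argument for obtaining a uniform $k_{\text{max}}$ and an explicit pointer to the convergence of the inner multirate FS iteration.
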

\begin{proof}
    The proof follows exactly the same procedure as the one for \Cref{cor:FS}.
\end{proof}

Therefore, with the restriction \cref{eq:thm_multirate_0} and a sufficiently large number of iterations for the FS iterative scheme, the Parareal algorithm with th multirate FS method converges for the weighted norm defined in \Cref{thm:multirate}.
    
\section{Numerical experiments} \label{sec:experiments}

The main goal of this section is twofold. On the one hand, we validate the qualitative results obtained in the previous section. In addition, we provide further convergence outcomes when the methods are implemented for the Biot's consolidation model \cref{eq:nlBiot} in more complex settings. Two benchmark problems are considered for that purpose, namely, a simple manufactured problem and the well-known Mandel's problem (see \cite{Mandel1953}). The same spatial discretization is employed for both test problems, that is, the stabilized P1-P1 scheme (see \cite{Aguilar_et_al_2008,Rodrigo_et_al_2016}) over a regular triangular mesh.

In the following, we introduce the three proper solvers under consideration in this section. In particular, three combinations are explored, differing in the propagators considered by the Parareal algorithm. Let us denote by Parareal/FS-FS the solver that considers the FS method as both propagators. Besides, Parareal/FS-MFS represents the solver constructed by the FS and multirate FS methods as coarse and fine propagators, respectively. Finally, the solver that considers the multirate FS method as both propagators is denoted by Parareal/MFS-MFS. In terms of notation, $q_c$ and $q_f$ denote the number of local flow time steps within one mechanics time step for the coarse and fine propagators, respectively. Remarkably, the energy norm is employed in order to compute the error of the proposed solvers, although similar convergence results are achieved for the weighted norms defined in the previous section. 

\subsection{Manufactured problem}

Let us first consider a rather simple problem, whose analytic solution is given by
\begin{subequations}\label{eq:exact_sol_manufact}
\begin{align}
u(x,y,t) &= \begin{pmatrix} \sin(\pi x)\cos(\pi y) e^{-t} \\ \cos(\pi x)\sin(\pi y) e^{-t}\end{pmatrix}, \\
p(x,y,t) &= \frac{4\mu + 2\lambda}{\alpha}\pi \cos(\pi x)\cos(\pi y)e^{-t},
\end{align}
\end{subequations}
in the domain $\Omega \times [0,T] = (0,1)^2 \times [0,5]$ and with $g \equiv 0$ and suitable $f$ and boundary and initial conditions in order to ensure that \cref{eq:exact_sol_manufact} is the solution of the problem \cref{eq:nlBiot}. We further set $\mu = \lambda = 10^4$, $\alpha = 1$, $S = 10^{-3}$, and $\Delta x = \Delta y = 0.25$. 

Then, \Cref{fig:error_Exp1} reports the convergence rate of the three proposed solvers for different values of permeability $K$. We consider $\delta t = 0.01$, $\Delta T = 0.1$, $q_f = 2$, and $q_c = 50$. The rate is computed dividing the error at every iteration by the error at the first iteration. It is worth noting that the three solvers converge rapidly to their fine solutions. In particular, it is remarkable that it only takes four iterations for the Parareal/MFS-MFS method to converge, which makes the solver especially useful. At first, the problems with lower permeabilities converge faster in all cases except for the Parareal/MFS-MFS, but after certain number of iterations, the tendency appears to change, achieving better convergence results for the cases with larger permeability. These results go hand in hand with the bounds provided in the previous section, which suggest that larger permeabilities are more likely to converge.

\begin{figure}[tbhp]
	\centering
	\subfloat[Parareal/FS-FS]{\label{fig:FS_FS_Exp1}\includegraphics[width=0.465\textwidth]{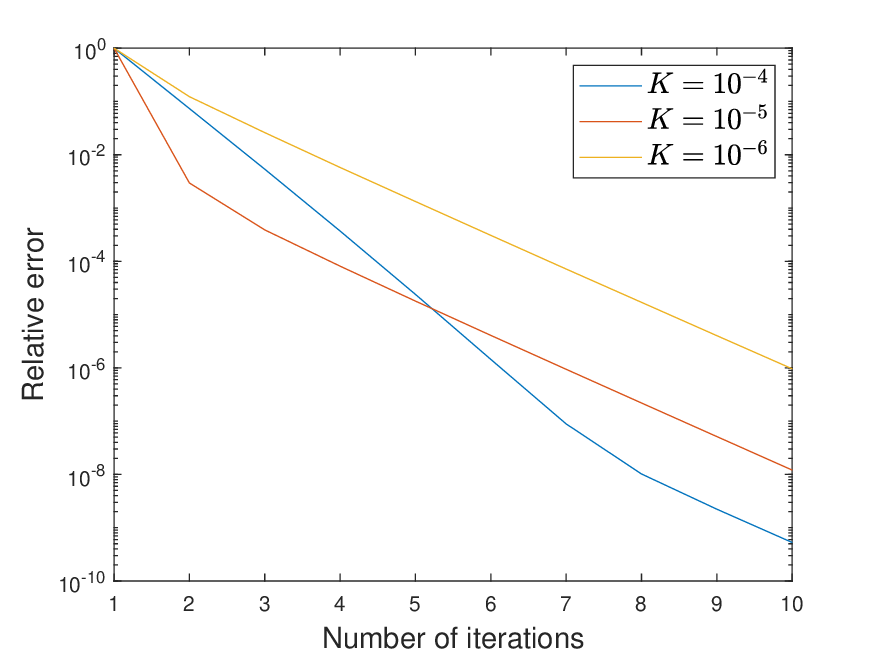}}
	\subfloat[Parareal/FS-MFS]{\label{fig:FS_FS_Exp1}\includegraphics[width=0.465\textwidth]{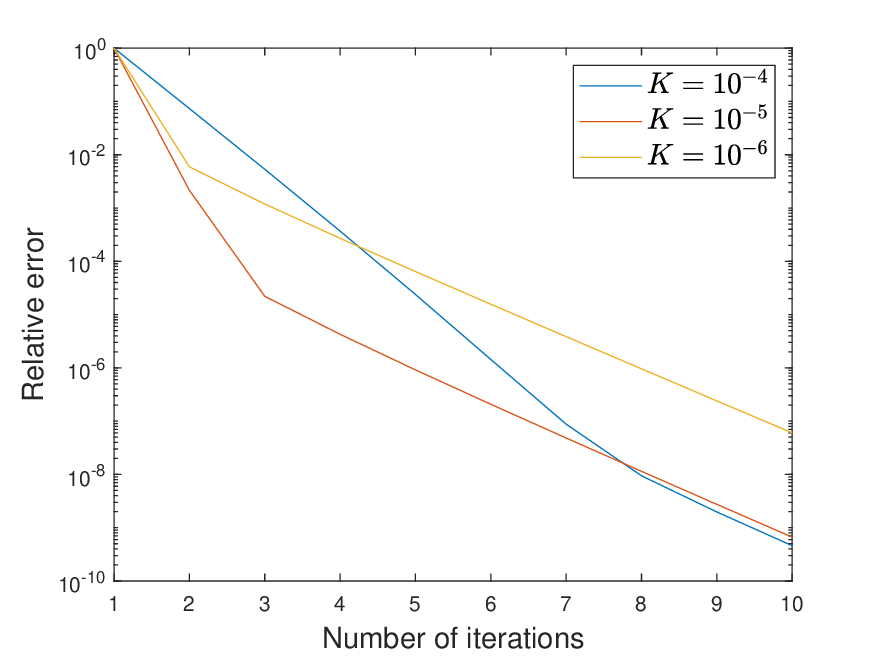}}
	
	\subfloat[Parareal/MFS-MFS]{\label{fig:MFS_MFS_Exp1}\includegraphics[width=0.465\textwidth]{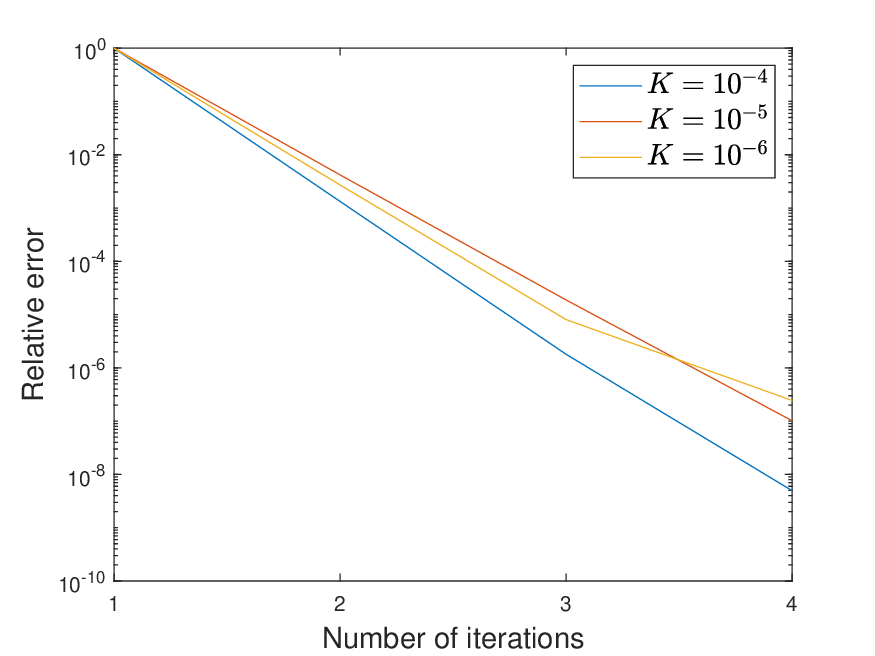}}
	\caption{Relative error of the Parareal/FS-FS, the Parareal/FS-MFS, and the Parareal/MFS-MFS methods applied to the manufactured problem, for decreasing value of $K$ and an increasing number of iterations.}
	\label{fig:error_Exp1}
\end{figure}

\subsection{Mandel's problem}

First formulated in \cite{Mandel1953}, the Mandel's problem is described as a poroelastic slab of size $2a$ and $2b$ in the $x$ and $y$ directions, respectively, and infinitely long in the $z$ direction. The slab is enclosed between two rigid plates that, at $t = 0$, compress the slab with an equal force of magnitude $2F$. Furthermore, the slab is considered to remain drained at $x = \pm a$. The physical representation of the problem is sketched in \Cref{fig:mandel}.

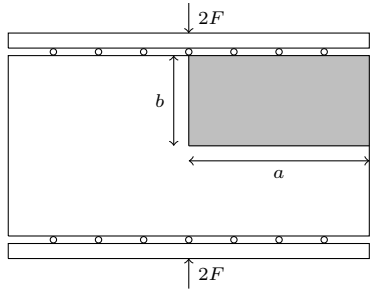
\begin{figure}[tbhp]
    \centering
    	\begin{tikzpicture}[scale = 0.4]
			\filldraw[lightgray,draw opacity=0.25] (6,3.75) -- (12,3.75) -- (12,6.75) -- (6,6.75) -- (6,3.75);
			
			\draw (0,0) -- (12,0) -- (12,0.5) -- (0,0.5) -- (0,0);
			\draw (0,0.75) -- (12,0.75) -- (12,6.75) -- (0,6.75) -- (0,0.75);
			\draw (0,7) -- (12,7) -- (12,7.5) -- (0,7.5) -- (0,7);
			\draw (6,3.75) -- (12,3.75);
			\draw (6,3.75) -- (6,6.75);
			
			\draw (1.5,0.625) circle (3pt);
			\draw (3,0.625) circle (3pt);
			\draw (4.5,0.625) circle (3pt);
			\draw (6,0.625) circle (3pt);
			\draw (7.5,0.625) circle (3pt);
			\draw (9,0.625) circle (3pt);
			\draw (10.5,0.625) circle (3pt);
			\draw (1.5,6.875) circle (3pt);
			\draw (3,6.875) circle (3pt);
			\draw (4.5,6.875) circle (3pt);
			\draw (6,6.875) circle (3pt);
			\draw (7.5,6.875) circle (3pt);
			\draw (9,6.875) circle (3pt);
			\draw (10.5,6.875) circle (3pt);
			
			\draw[->] (6,-1) -- (6,0);
			\draw[->] (6,8.5) -- (6,7.5);
			
			\draw[<->] (5.5,3.75) -- (5.5,6.75);
			\draw[<->] (6,3.25) -- (12,3.25);
			
			\path [anchor=west] (6,-0.5) node {\scriptsize $2F$};
			\path [anchor=west] (6,8) node {\scriptsize $2F$};
			\path [anchor=east] (5.5,5.25) node {\scriptsize $b$};
			\path [anchor=north] (9,3.25) node {\scriptsize $a$};
			
		\end{tikzpicture}
    \caption{Mandel's problem.}
    \label{fig:mandel}
\end{figure}

Since the problem is symmetric in the $x$ and $y$ directions, and independent of the $z$ direction, we restrict the problem to a quarter of its original domain, i.e., $\Omega = (0,a) \times (0,b)$. The boundary conditions of the problem are thus displayed in \Cref{tab:bc_mandel}, while the initial conditions are imposed to be the exact solution in \cite{MikelicWangWheeler2014} at $t = 0$. Observe that $u_y(b,t)$ represents the exact solution of the displacement in the $y$-direction at $y = b$. Finally, the input parameters for the problem are provided by \Cref{tab:input_mandel}.

\begin{table}[tbhp]
\footnotesize
\caption{Boundary conditions for the Mandel's problem.}\label{tab:bc_mandel}
\begin{center}
\begin{tabular}{|c|c|c|} \hline
Boundary & Flow & Mechanics \\ \hline
$x = 0$ & $\nabla p\cdot \boldsymbol{n} = 0$ & $u \cdot \boldsymbol{n} = 0$ \\
$y = 0$ & $\nabla p\cdot \boldsymbol{n} = 0$ & $u \cdot \boldsymbol{n} = 0$ \\
$x = a$ & $ p = 0$ & $\sigma \cdot \boldsymbol{n} = 0$ \\
$y = b$ & $\nabla p\cdot \boldsymbol{n} = 0$ & $\sigma_{12} = 0, ~~ u\cdot\boldsymbol{n} = u_y(b,t)$ \\
\hline
\end{tabular}
\end{center}
\end{table}

\begin{table}[tbhp]
\footnotesize
\caption{Input parameters for the Mandel's problem.}\label{tab:input_mandel}
\begin{center}
\begin{tabular}{|ll|} \hline
Dimension in $x$ ($a$): & 100 m \\
Dimension in $y$ ($b$): & 10 m \\
Young's modulus ($E$): & $5.94\cdot 10^{9}$ Pa \\
Poisson's ratio ($\nu$): & 0.2 \\
Fluid compressibility ($c_f$): & $3.03\cdot 10^{-10}$ /Pa \\
Biot's constant ($\alpha$): & 1.0 \\
Initial porosity ($\phi$): & 0.2 \\
Fluid viscosity ($\eta$): & 1.0 cP \\
Grid spacing in $x$ ($\Delta x$) & 2.5 m \\
Grid spacing in $y$ ($\Delta y$): & 0.25 m \\
Total simulation time ($T$): & 50,000 s \\
Skempton coefficient ($B$): & 0.83333 \\
Undrained Poisson's ratio ($\nu_u$): & 0.44 \\
Biot's modulus ($M$): & $1.65\cdot 10^{10}$ Pa \\
\hline
\end{tabular}
\end{center}
\end{table}

Then, \Cref{fig:error_Exp2} shows the relative error of the three solvers when they are implemented for solving the Mandel's problem with several values of $K$, and numerical parameters $\delta t = 10$ s, $\Delta T = 200$ s, $q_f = 2$, and $q_c = 20$. As for the manufactured problem, all the methods converge for every iteration, and the convergence speed is remarkably high. In this case, both Parareal/FS-FS and Parareal/FS-MFS methods have an increasing convergence rate when $K$ is set to be larger, except for $K = $ 1000 mD for the former solver, where the convergence is slightly slower than for the other values of the permeability. However, that behaviour no longer applies to the Parareal/MFS-MFS method, for which just the opposite can be observed, that is, lower values of $K$ give a better convergence performance. Such a phenomenon may be caused by the choice of the multirate iterative scheme as the coarse propagator of the Parareal algorithm, although for the manufactured problem that behaviour was not observed. The Parareal/FS-FS method provides the slowest convergence among the three proposed solvers, which highlights the potential of the multirate techniques in combination with the Parareal algorithm.

\begin{figure}[tbhp]
	\centering
	\subfloat[Parareal/FS-FS]{\label{fig:FS_FS_Exp2}\includegraphics[width=0.465\textwidth]{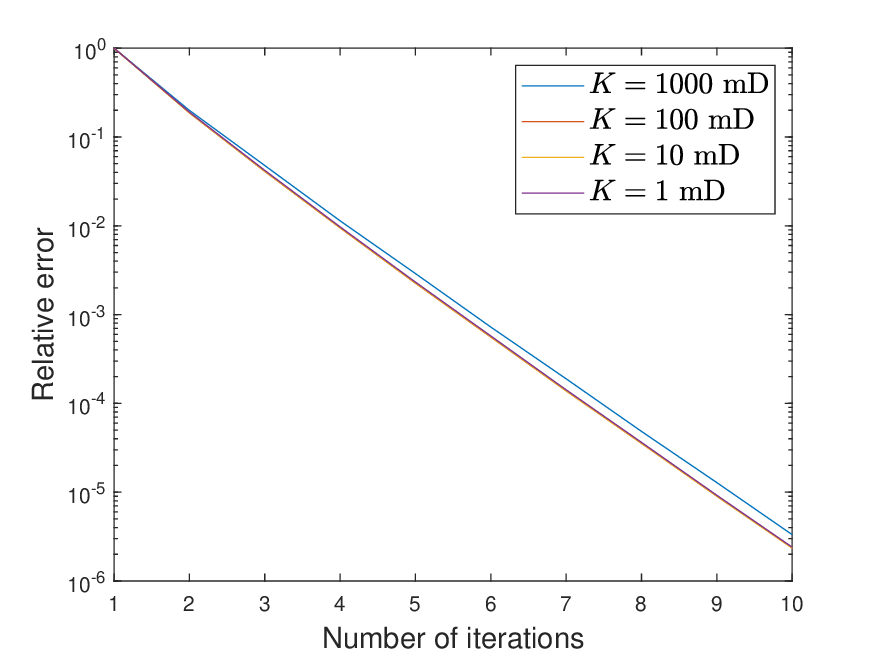}}
	\subfloat[Parareal/FS-MFS]{\label{fig:FS_FS_Exp2}\includegraphics[width=0.465\textwidth]{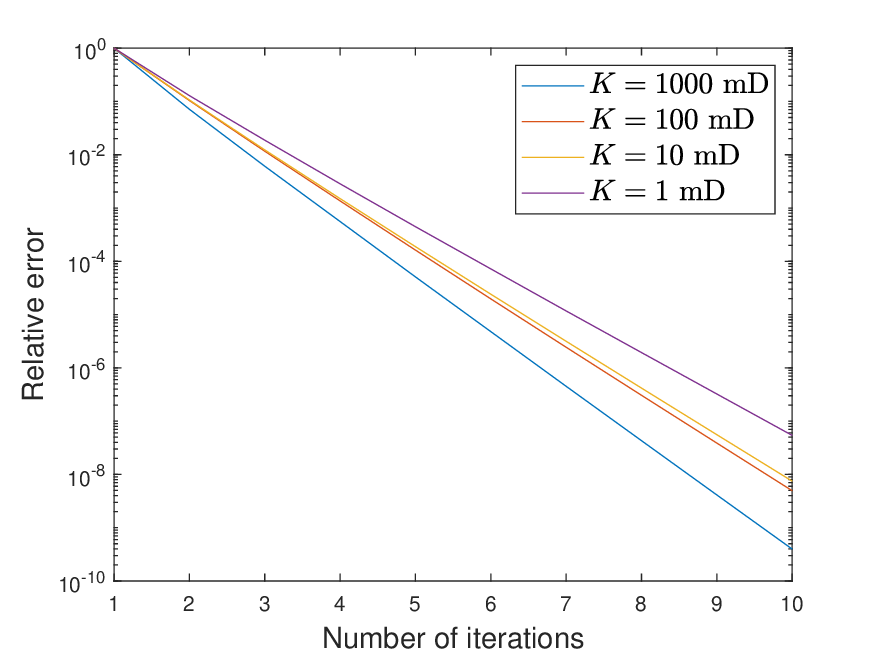}}
	
	\subfloat[Parareal/MFS-MFS]{\label{fig:MFS_MFS_Exp2}\includegraphics[width=0.465\textwidth]{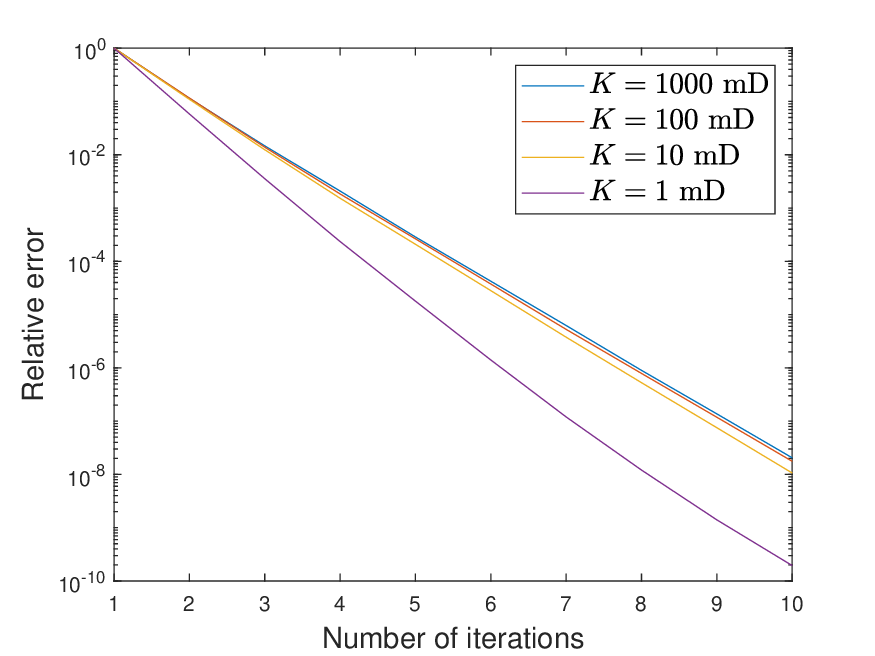}}
	\caption{Relative error of the Parareal/FS-FS, the Parareal/FS-MFS, and the Parareal/MFS-MFS methods applied to the Mandel's problem, for decreasing value of $K$ and an increasing number of iterations.}
	\label{fig:error_Exp2}
\end{figure}

\section{Conclusions} \label{sec:conclusions}

We have extended the Parareal algorithm to the case of degenerate differential-algebraic equations as given by the Biot equations. We establish sufficient conditions for convergence of the Parareal algorithm under each coarse propagator, including explicit time step restrictions derived from coercivity and continuity properties of the system operators. The multirate FS scheme exploits the separation of time scales between flow and mechanics, enabling asynchronous time stepping and improved computational efficiency. The matrix-based formulation  for this scheme provides a compact and generalizable framework for future extensions.

Numerical experiments on both a manufactured solution and the Mandel benchmark problem validated the theoretical predictions and demonstrated the robustness and convergence of the proposed methods across a range of permeability values. The results highlight the potential of multirate and time-parallel strategies for accelerating simulations in poromechanics and other multiphysics applications.


\section*{Acknowledgments}
IJC would like to acknowledge the financial support from Public University of Navarre (PhD Grant). The work of IJC and FG was supported by Grant PID2022-140108NB-I00 funded by MCIN/AEI/10.13039/501100011033 and by ``ERDF A way of making Europe''. FR and KK would like to acknowledge the financial support from the Vista Center for Modeling of Coupled Subsurface Dynamics (VISTA CSD). KK would like to acknowledge the Center for Sustainable Subsurface Resources (CSSR). FR wants to thank the support from the project MUPSI, CETP-2023-00298. 

\bibliographystyle{siamplain}
\bibliography{references}
\end{document}